\documentclass{article}
\usepackage[utf8]{inputenc}
\usepackage[T1]{fontenc}
\usepackage[british,UKenglish,USenglish,american]{babel}
\usepackage{amsmath}
\usepackage{amsfonts}
\usepackage{amssymb}
\usepackage{hyperref}
\usepackage{mathrsfs}
\usepackage{fancyhdr}
\usepackage{verbatim}
\usepackage{bbm}
\usepackage{bm}
\usepackage{mathtools}
\DeclarePairedDelimiter\floor{\lfloor}{\rfloor}
\usepackage{graphicx}
\usepackage{xcolor}
    \usepackage{booktabs}
    \usepackage{float}

\numberwithin{equation}{section}
\usepackage[titletoc,title]{appendix}
\usepackage{amsthm}
\usepackage{todonotes}

\newcommand{\be}{\begin{eqnarray}}
\newcommand{\ee}{\end{eqnarray}}
\newcommand{\ce}{\begin{eqnarray*}}
\newcommand{\de}{\end{eqnarray*}}
\newtheorem{theorem}{Theorem}[section]
\newtheorem{lemma}[theorem]{Lemma}
\newtheorem{proposition}[theorem]{Proposition}
\newtheorem{corollary}[theorem]{Corollary}
\theoremstyle{remark}

\newtheorem{example}[theorem]{Example}
\newtheorem{remark}[theorem]{Remark}
\newtheorem{definition}[theorem]{Definition}

\usepackage{cleveref}
\Crefname{eqn}{Equation}{Equations}
\Crefname{theorem}{Theorem}{Theorems}
\Crefname{lemma}{Lemma}{Lemmas}
\Crefname{corollary}{Corollary}{Corollaries}
\Crefname{definition}{Definition}{Definitions}
\Crefname{remark}{Remark}{Remarks}
\Crefname{assumption}{Assumption}{Assumptions}
\Crefname{innercustomthm}{Condition}{Conditions}
\crefrangelabelformat{innercustomthm}{#3#1#4-#5#2#6}

\def\R{\mathbb{R}}

\def\N{\mathbb{N}}
\def\Z{\mathbb{Z}}

\def\E{\mathbb{E}}
\def\T{\mathbb{T}}

\def\ZZ{\mathbb{Z}^2 \setminus \{0\}}

\def\bd{\begin{definition}}
\def\ed{\end{definition}}
\def\bp{\begin{proposition}}
\def\ep{\end{proposition}}
\def\bc{\begin{corollary}}
\def\ec{\end{corollary}}
\def\bx{\begin{example}}
\def\ex{\end{example}}

\def\mE{{\mathbb E}}

\def\mR{{\mathbb R}}

\def\geq{\geqslant}
\def\leq{\leqslant}

\allowdisplaybreaks

\newcommand{\dd}{\,\mathrm{d}}

\newcommand{\safeincludegraphics}[2][]{%
  \IfFileExists{#2}{\includegraphics[#1]{#2}}{%
    \fbox{\parbox{0.8\textwidth}{\centering Missing figure file: \texttt{\detokenize{#2}}}}%
  }%
}

\title{Numerical Study of a Surface Growth Model with Singular Noise}
\author{Dirk Bl\"omker, David Buchberger, Johannes  Rimmele}
\date{\today}

\begin{document}

\maketitle

\begin{abstract}
We study a stochastic model for epitaxial thin-film growth driven by spatially rough additive noise in a regime where the noise is singular and regularization via truncation in Fourier space is used to give a meaning to the solution, leading to a vanishing nonlinearity in the limit.
In order to study this phenomenon numerically, the nonlinearity is discretized by a spectral Galerkin projection, while time integration is performed with an exponential Euler scheme. For roughness stronger than space-time white noise, we derive strong error estimates in $L^p(\Omega;C([0,T];\mathcal H^1))$ that display explicitly the interaction between the spatial cut-off, the time step, and the decay of the nonlinear current. 

We also quantify the growth of the truncated stochastic convolution and the corresponding vanishing rate of the nonlinearity. Numerical experiments illustrate the transition from persistent hill formation to noise-dominated dynamics as the roughness parameter increases.

\medskip
\noindent
{\bf Keywords:} singular SPDEs, space-time white noise, spectral Galerkin method, exponential Euler scheme, surface growth model, regularization by noise
\\
{\bf MSC (2020):} 60H15, 60H10, 60H17, 60H35.
\end{abstract}

\section{Introduction - Physical and Mathematical Model}

We consider the stochastic continuum model for epitaxial thin-film growth studied mathematically in \cite{BR} and, in greater detail, in \cite{Rimmele2026}. The model was introduced for one-dimensional substrates by Hunt, Sander, and coauthors~\cite{HOWOS:94} and subsequently extended to the two-dimensional setting in~\cite{JOHGSSO:94}. General introductions to continuum models for surface growth and molecular beam epitaxy can be found in~\cite{BS:95,KS:95,KS:91,LDS:91}.
One of the principal experimental techniques for producing epitaxial thin films is molecular beam epitaxy. In this process, atoms are deposited from the vapour phase onto a crystalline substrate. After deposition, they diffuse along terraces and eventually settle at energetically favourable sites. This technique is widely used in the fabrication of nanostructures, including quantum wires and quantum dots.

A decisive physical mechanism in epitaxial growth is the Schwoebel barrier~\cite{SCHWOE:66}. This additional energy barrier inhibits deposited atoms from descending across step edges. Consequently, the surface current acquires an effective uphill component, which destabilises flat surface profiles and promotes the formation of mounds and other spatial growth patterns.

The resulting surface evolution is formally described by a phenomenological model given by the stochastic partial differential equation
\begin{equation}\label{e:SPDE}
\partial_t h
=
-\delta \Delta^2 h
-\nabla\cdot\left(
\frac{\nabla h}{1+|\nabla h|^2}
\right)
+\sigma \xi.
\end{equation}
Here, $h=h(t,x)$ denotes the surface height at time $t>0$ above the spatial position $x$. We consider $x\in\T^d$, with $d\in \{1,2 \}$, and impose for simplicity periodic boundary conditions.
The physically most relevant case is $d=2$. The equation is also sometimes considered on the whole space $\R^2$, which leads to substantially different analytical difficulties.

The fourth-order linear term $-\delta\Delta^2h$, with a usually small $\delta>0$, models  linearized surface diffusion and has a regularizing effect on the height profile. 

The  nonlinear term 
describes the divergence of the slope-dependent surface current induced by the Schwoebel barrier. For small slopes, i.e. $|\nabla h|\approx 0$, we have
\begin{align*}
-\nabla\cdot\left(
\frac{\nabla h}{1+|\nabla h|^2}
\right)
\approx -\Delta h,
\end{align*}
hence it acts as a destabilizing uphill diffusion term that would lead to hill growth. 
The competition between this destabilizing current and the stabilizing fourth-order diffusion gives rise to the characteristic pattern formation of the model.

In contrast, for large values of $|\nabla h|$, the magnitude of the surface current vanishes
\begin{align*} 
\frac{\nabla h}{1+|\nabla h|^2}
  \approx 0.
\end{align*}
This limits the destabilizing effect of the nonlinear current for steep surfaces.
As shown in \cite{BR} or \cite{Rimmele2026}, this mechanism also leads to a suppression of the nonlinearity for rough surfaces, which are thus stabilized by the noise. The aim of this paper is to derive error estimates for a numerical discretization that allows this rough noise stabilization mechanism to be resolved numerically.
In the classical formulation of~\eqref{e:SPDE}, the random forcing is given by $\sigma\xi$, where $\sigma>0$ denotes the noise intensity and $\xi=\partial_tW$ is space-time white noise, formally represented as the time derivative of a cylindrical Wiener process $W$. The noise models microscopic fluctuations in the deposition process, such as random variations in the arrival and diffusion of particles.

Our numerical experiments indicate that space-time white noise is not sufficiently rough for this stabilization mechanism to become visible at computationally accessible spatial resolutions. 
In this case, the decay of the nonlinear contribution is too slow, and resolving the
asymptotic regime would require prohibitively fine spatial
discretisations.
For this reason, in the present article,  we consider the more general family  of noises $\xi^{(\alpha)}=\partial_tW^{(\alpha)}$, with $\alpha\in[0,1)$. The case $\alpha=0$ corresponds to classical space-time white noise, whereas $\alpha>0$ yields spatially rougher noise $\xi^{(\alpha)}$. Its precise characterization is given in \eqref{rougher:noise}. 
For all these choices, the equation is considered through a suitable regularization, and the limit of
vanishing regularization leads to the same suppression mechanism of the nonlinear current.  

To describe the surface relative to its mean height, we work in a frame moving with the mean growth and restrict the equation to the mean-zero subspace. Accordingly, we impose
\begin{align*}
\int_{\T^d}h(t,x)\dd x=0,
\qquad t\geq0,
\end{align*}
and remove the spatially constant Fourier mode from the forcing. This eliminates the evolution of the mean height and isolates fluctuations in the surface profile.

We organize the paper in the following way. 
In \Cref{sec:AF} we give the analytic framework in which we will work,
while in \Cref{sec:MR} we state the main result of the present paper. In   \Cref{thm:mainEuler} we derive a detailed error estimate between the solution of the SPDE with regularized noise and the fully discrete approximation.
Moreover, in Theorem \ref{cor:main} we establish convergence of the fully discrete scheme towards the limiting linear SPDE, i.e.\ the Ornstein--Uhlenbeck process. 

In  \Cref{sec:AR} we state estimates on the Ornstein--Uhlenbeck process and divergence rates for vanishing regularization. These are essential to show the result of vanishing non-linearity that allows us to recover the results of \cite{BR} in this setting of rougher noise.

The main result of  \Cref{thm:mainEuler} is proven in Sections \ref{sec:proof} and \ref{sec:mainproof}. 
While \Cref{sec:proof} bounds several error terms, \Cref{sec:mainproof} finishes the proof using a Gr\"onwall-type argument, which allows for poles in time arising from the derivatives in the nonlinearity. 

Finally, \Cref{num:simula} presents numerical experiments that illustrate the different regimes.
 For small values of $\alpha$, corresponding to comparatively more regular noise, the suppression of hill formation is not visible at computationally accessible resolutions. For larger values of $\alpha$, and hence rougher noise, the transition to noise-dominated dynamics becomes clearly observable.   

\section{Analytical Framework}
\label{sec:AF}

In the following numerical analysis, we present simple convergence rates for the approximation of the mild solutions of spectral regularization of the formal SPDE
\begin{equation} \label{e:rSPDE}
	\partial_t u = - \delta \Delta^2 u 
	- \nabla \cdot \frac{\nabla u}{1+|\nabla u|^2} 
	+ \sigma \, \xi^{(\alpha)}, \qquad \text{ for $t \in [0,T]$, $x \in \T^2$,}
\end{equation}
where we consider rougher noise $\xi^{(\alpha)} = \partial_t W^{(\alpha)}$ for $\alpha \in [0, 1)$, and the case $\alpha = 0$ represents classical space-time white noise. 
We also study the corresponding stochastic convolution
	\begin{align} \label {rougher:noise}
Z^{(\alpha)} \in C^0 \left(
	[0,T] , \mathcal{H}^{1- \alpha- \varepsilon } \left(\mathbb{T}^2 \right)
	\right) , \qquad
		Z^{(\alpha)}(t,x) \coloneqq  \sum_{k \in \ZZ} \mu_k^{\frac{\alpha}{2}} Z_k(t) e_k(x), 
	\end{align}
	for each $\varepsilon >0$ and $t \in [0,T]$, $x \in \T^2$,
	with 
    \[Z_k(t)
\coloneqq
\int_0^t e^{-  (t-s) \delta\mu_k^2}\,\dd\beta_k(s)
\quad \text{for each }k \in \ZZ,
\]
according to the eigenvalues $\left(\mu_k \right)_{k \in \ZZ}$ of $-\Delta$ (see \eqref{Laplace:EV}),
    where $\left(\beta_k \right)_{k \in \ZZ}$ is an i.i.d.\ family of Brownian motions.
    Throughout, we fix $\sigma>0$. 
    
    For simplicity, we choose $\delta = 1$. 
    This means, we examine the SPDE 
    \eqref{e:rSPDE} with the negative Bilaplace operator~$A \coloneqq -\Delta^2$ multiplied by the parameter~$\delta = 1$.
    Any other choice would only change the precise value of the constants. 

    \begin{remark}
    From a numerical point of view, the consideration of rougher noise as in \eqref{e:rSPDE} makes sense due to the only logarithmic convergence of 
    $\E\big[\big \|P_{N} Z^{(0)}(t)\big \|_{\mathcal{H}^1}^2\big] $ 
    for the spectral projection $P_N$ introduced below in \eqref{def:PN} (see \Cref{convrate:Zalpha}).
    The speed of this divergence determines how fine we need to discretize in order to see the effect of vanishing nonlinearity.
    \end{remark}

    \begin{remark}
    Let $\alpha, \beta  > 0$   and $d \in \N$ be the dimension.
 Then the eigenvalues $\big( - \mu_k^{\beta} \big)_{k \in \Z^d \setminus \{0\} }$ and the corresponding eigenfunctions $\left(e_k  \right)_{k \in \Z^d \setminus \{0\} }$ of the linear operator $- (-\Delta)^{\beta}$ on the $d$-dimensional torus $\T^d$ satisfy 
    $$
    0 \leq \mu_k^{\beta} \asymp |k|^{2\beta},
    $$
    where $\asymp$ denotes an estimate from below and above for two different constants.
    
    Thus, it is well known to show that 
    the stochastic convolution 
    $$
    Z_d^{(\alpha)} (t,x)
    \coloneqq \sum_{k \in \Z^d \setminus \{0\} } \mu_k^{\frac{\alpha}{2}} \int_0^t  e^{-(t-s)\mu_k^{\beta}} \dd \beta_k(s) e_k(x) , \qquad
    t \in [0,T] , \ x \in \T^d
    $$  
    has the regularity
    \begin{align} \label {d-dim:rougher:noise}
Z_d^{(\alpha)} \in C^0 \left(
	[0,T] , \mathcal{H}^{\rho - \alpha  } \left(\mathbb{T}^d \right)
	\right),
    \qquad
    \text{ for $\rho < \beta  - \frac{d}{2}$.}
	\end{align}
    For a definition of the standard Sobolev-space $\mathcal{H}^s$ see Definition \ref{def:Halpha}.
    \end{remark}
    
	We will use the following notation for the nonlinearity
    \begin{align*}
        f(z)\coloneqq\frac{z}{1+|z|^2}, \quad \text{ for $z \in \R^2$,}
\qquad
\mathfrak F(u)\coloneqq-\nabla\cdot f(\nabla u) \quad \text{for } u\in\mathcal{H}^1.
    \end{align*}
	Since the focus is on analysing convergence rates with respect to the spatial and temporal discretisation, the dependence of constants on $L$ and $T$ will not always be tracked explicitly in this context.

	This paper employs a spectral Galerkin method for spatial discretisation and an Euler scheme for temporal integration, as detailed in the following.
    Note that we consider a standard interpolation in time of the mild formulation using a rounding down to the temporal grid in the nonlinearity.
    
	\begin{definition}[Spectral Galerkin method] \label{def:SGm}
		
		Let $n \in \N$ and define $h \coloneqq \frac{T}{n}$. The spectral Galerkin method with exponential Euler in time is then given by
		\begin{align} \label {eq:Euler-uN}
			u_h^{(N)}(t)
			\coloneqq 
			e^{tA}P_N u_0 +\int_0^t e^{(t-s)A} P_N \mathfrak{F} (u_h^{(N)}(\floor{s}_h)) \dd s 
			+ \sigma P_N Z^{(\alpha)}(t),
		\end{align}
		where the time variable $s$ is rounded down to the nearest grid point $jh$,
		for $j=0,1,\ldots, n -1$,
		if $s \in \left[j h ,\ (j+1) h \right)$,
		and we use the
		orthogonal projection
		\begin{align} \label{def:PN}
			P_N : L^2 \left(\mathbb{T}^2 \right) \to L^2 \left(\mathbb{T}^2 \right),
			\quad
			P_N
			\sum_{k \in \ZZ } u_k e_k(x) =
			\sum_{k \in \ZZ , |k| \leq N } u_k e_k(x).
		\end{align}
	\end{definition}

	\begin{definition} [Exponential Euler scheme] \label[definition]{def:exponential-Euler}
		Evaluating the mild solution at the grid points $jh$ yields the fully discrete exponential Euler scheme for $v^{(N)}_j = u_h^{(N)}(jh)$.
		\[
		v^{(N)}_{j+1} =
		e^{hA}v^{(N)}_{j} +
		\int_0^h e^{(h-s)A} P_N \mathfrak{F} (v^{(N)}_j) \dd s + \sigma P_N \tilde{Z}_j,\quad j=0,1,\ldots, n - 1
		\]
		with
		\[
		\tilde{Z}_j = \int_{jh}^{(j+1)h} e^{\left((j+1)h-s \right)A}\dd W^{(\alpha)}(s).
		\]
	\end{definition}
        Furthermore, we define the mild solution of the regularized SPDE 
        \begin{align} \label{def:uN}
			u^{(N)} (t) & \coloneqq 
			e^{tA} u_0 + \int_{0}^t e^{(t-s)A}  \mathfrak{F} \left(
			u^{(N)}(s)
			\right) \dd s + \sigma P_N Z^{(\alpha)}(t)
		\end{align}
        for $t \in [0,T]$ and  $N \in \N$.
        
		\noindent
		For $\alpha=0$ and the cut-off sequence
		$\left(\alpha^{(N)}_k\right)_{k\in \ZZ } =
		\left(\mathbbm{1}_{|k| \leq N } \right)_{k\in \ZZ }
		$
		as a regularizing sequence for the white noise,
		i.e.
		\[
		P_N Z^{(0)}(t) = \sum_{k \in \ZZ} \alpha_k^{(N)} Z_k(t) e_k(x).
		\]
		In \cite{BR} we have shown
		\begin{align}  
			u^{(N)} (t) 
			\to
			u(t)=e^{tA} u_0 + \sigma Z^{(0)}(t) 
            \quad \text{for } N\to\infty
		\end{align}
		in $L^p \left(\Omega , C^0 \left( [0,T] \times \T^2 \right) \right)$ for any $p>1$, but with a very slow logarithmic rate. 
        
		In particular we saw in \cite{BR}, that the limiting function $u$ of the sequence $\left(u^{(N)}\right)_{N \in \N}$ is independent of the regularization and $u$ is the mild solution of
		\begin{align*}  
			\begin{cases}
				\partial_t u & = -\Delta^2 u + \sigma \partial_t W, \\
				u (0) & = u_0.
			\end{cases}
		\end{align*}
        However, we will briefly revisit these results in the following, as for $\alpha\not=0$ we obtain the mild solution of
        \begin{align} \label{eq:SPDE-ou}
			\begin{cases}
				\partial_t u & = -\Delta^2 u + \sigma \partial_t W^{(\alpha)}, \\
				u (0) & = u_0.
			\end{cases}
		\end{align}

	\begin{remark}
	Throughout this article, the cut-off sequence
	$$\big(\alpha^{(N)}_k \big)_{k\in \ZZ } =
	\big(\mathbbm{1}_{|k| \leq N } \big)_{k\in \ZZ }$$
	can be replaced
	with $\big(\mathbbm{1}_{|k| \leq \zeta(N) } \big)_{k\in \ZZ }$, where $\left( \zeta(N) \right)_{N \in \N}$ is any non-decreasing sequence that diverges to infinity as $N$ increases.	
	Equivalently, this can be characterized by an orthogonal projection	
	\begin{align*}	
		P_{\zeta(N)} : L^2 \left(\mathbb{T}^2 \right) \to L^2 \left(\mathbb{T}^2 \right),		
		\quad		
		P_{\zeta(N)}		
		\sum_{k \in \ZZ } u_k e_k(x) 
		=
		\sum_{k \in \ZZ , |k| \leq \zeta(N) } u_k e_k(x).	
	\end{align*}
	However, for the sake of simplicity, we will focus our investigation on the case of \Cref{def:SGm} for $\alpha \in (0,1)$.
	\end{remark}

    Throughout, $C>0$ denotes a generic constant that may change from line to line. Its dependence on the relevant parameters is specified in the statements of the respective results and is generally omitted from the proofs.
    The dependence on the parameters $L$ and~$T$ is completely omitted.    
	
		\section{Main Results} \label{sec:MR}
        
		This section presents the main results of the paper, beginning with the convergence rate of the error function $e^{(N)}_h  \coloneqq  u^{(N)}-u_h^{(N)}$ when comparing the mild solution $u^{(N)}$ of the SPDE with regularized noise to the Euler discretisation of the spectral Galerkin approximation $u_h^{(N)}$.
		
		\begin{theorem}  \label {thm:mainEuler} 
				Let $T>0$,  $\gamma \in (0,2)$, $p > \frac{4}{\gamma}$, $\varepsilon \in \big( 0 , \frac{1}{2} - \frac{1}{p} \big)$, $\alpha \in (0,1)$,    and $u_0 \in \mathcal{H}^{1 + \eta} \left(\T^2 \right) \cap W^{1 , \infty} \left(\T^2 \right)$ with~$\eta \in (0,4)$.
			Let $u^{(N)}_h$ and~$u^{(N)}$ be defined by \eqref{eq:Euler-uN} and \eqref{def:uN}.
			Then, there is a constant  $C = C\left(\alpha , \eta , p , \gamma, \varepsilon, \sigma, L, T, \left \| u_0 \right \|_{W^{1 , \infty}} \right) >0$, independent of $N$ and $h$, such that we have
			\begin{align*} 
				& \left( \mE \sup_{t\in[0,T]} \left\|u_h^{(N)}(t)-u^{(N)}(t) \right \|_{\mathcal{H}^1}^p \right)^{\frac{1}{p}} \nonumber
				\\& \leq
				C \left( 
				\| u_0\|_{\mathcal{H}^{1+\eta }} \left( h^{\frac{\eta }{4}} +
				N^{-\eta }
				\right)
				+
                N^{2  + \alpha }
				h^{\frac{1}{2} - \frac{1}{p}  - \varepsilon}
				+ 
				N^{-2 + \gamma - \frac{2 \alpha}{p+2}}
				+
				{h}^{\frac{1}{2} - \frac{\gamma}{4}}
				\left[ N^{- \frac{2 \alpha}{p +2}} + h \right] 
				\right).
			\end{align*}
		\end{theorem}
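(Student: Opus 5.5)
Subtracting \eqref{eq:Euler-uN} from \eqref{def:uN} gives the error identity
\begin{align*}
e_h^{(N)}(t) = (I-P_N)e^{tA}u_0 + \int_0^t e^{(t-s)A}\Big[\mathfrak F(u^{(N)}(s)) - P_N\mathfrak F(u_h^{(N)}(\floor{s}_h))\Big]\dd s .
\end{align*}
The noise terms cancel exactly. I would split the integrand into three parts:
\begin{itemize}
\item[(i)] the spectral tail $(I-P_N)\mathfrak F(u^{(N)}(s))$;
\item[(ii)] the time-discretisation defect $P_N[\mathfrak F(u^{(N)}(s))-\mathfrak F(u^{(N)}(\floor{s}_h))]$;
\item[(iii)] the propagated error $P_N[\mathfrak F(u^{(N)}(\floor{s}_h))-\mathfrak F(u_h^{(N)}(\floor{s}_h))]$.
\end{itemize}

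The initial-value term is straightforward. We have $\|(I-P_N)e^{tA}u_0\|_{\mathcal H^1}\le N^{-\eta}\|u_0\|_{\mathcal H^{1+\eta}}$. The associated time-regularity contribution, $\|(e^{(t-\floor{t}_h)A}-I)e^{\floor{t}_hA}u_0\|_{\mathcal H^1}\le Ch^{\eta/4}\|u_0\|_{\mathcal H^{1+\eta}}$, enters through (ii). Both are standard semigroup estimates.

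\textbf{Step (i).} Since $f$ is bounded with bounded derivative, $\mathfrak F(u)=-\nabla\cdot f(\nabla u)$ lies in $\mathcal H^{-1}$ uniformly, with norm bounded by $\|f\|_\infty$. Using smoothing $\|e^{tA}\|_{\mathcal H^{-1+\gamma'}\to\mathcal H^1}\le Ct^{-(2-\gamma')/4}$, I would gain powers of $N$ by measuring $f(\nabla u^{(N)})$ in a positive Sobolev norm. Here the vanishing nonlinearity should enter. From \Cref{sec:AR} (the divergence rate of $\|P_NZ^{(\alpha)}\|_{\mathcal H^1}$ and the decay of $f(\nabla u^{(N)})$ as $|\nabla P_N Z^{(\alpha)}|\to\infty$), I expect an $L^p$-moment bound of the form $\|f(\nabla u^{(N)}(s))\|_{L^2}\lesssim N^{-2\alpha/(p+2)}$. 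This bound would come from interpolating the pointwise bound $|f(z)|\le |z|^{-1}$ with Gaussian small-ball estimates of $\nabla P_NZ^{(\alpha)}$. Combining it with $\|(I-P_N)\|_{\mathcal H^{-1+\gamma}\to\mathcal H^{-1}}\le N^{-\gamma}$-type trade-offs and the semigroup pole $s^{-1+\gamma/4}$, which is integrable for $\gamma>0$, should produce the term $N^{-2+\gamma-2\alpha/(p+2)}$.

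\textbf{Step (ii).} The defect is controlled via the Lipschitz property of $f$: $\|\mathfrak F(u)-\mathfrak F(v)\|_{\mathcal H^{-1}}\le C\|u-v\|_{\mathcal H^1}$. This reduces (ii) to the temporal Hölder continuity of $u^{(N)}$ in $\mathcal H^1$. The noise part $P_NZ^{(\alpha)}$ has $\mathcal H^1$-increments of size $N^{2+\alpha}|t-s|^{1/2}$, since each mode has variance $\mu_k^{1+\alpha}|t-s|$ and there are about $N^2$ modes. Kolmogorov/Garsia–Rodemich–Rumsey in $L^p$ then gives $N^{2+\alpha}h^{1/2-1/p-\varepsilon}$; this is where $\varepsilon<\frac12-\frac1p$ is needed. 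The drift part of $u^{(N)}$ has Hölder exponent $\frac12-\frac\gamma4$ in $\mathcal H^1$, using the bound from (i) again, which yields $h^{1/2-\gamma/4}[N^{-2\alpha/(p+2)}+h]$. The extra $h$ accounts for the rounding in the first step.

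\textbf{Step (iii) and Gronwall.} Term (iii) is bounded by $C\int_0^t (t-s)^{-1/2}\|e_h^{(N)}(\floor{s}_h)\|_{\mathcal H^1}\dd s$, using $\|e^{rA}\|_{\mathcal H^{-1}\to\mathcal H^1}\le Cr^{-1/2}$. Taking $\sup_{t}$ and $L^p(\Omega)$ norms and applying a Gronwall lemma with weakly singular kernel (Henry-type, applied to $\sup_{r\le t}$) closes the estimate. The constant depends only on $T$ and $\mathrm{Lip}(f)$, not on $N$ or $h$.

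I expect the \textbf{main obstacle} to be Step (i): obtaining the quantitative decay $N^{-2\alpha/(p+2)}$ of $f(\nabla u^{(N)})$ uniformly on $[0,T]$ inside $L^p(\Omega)$. This requires controlling small values of $|\nabla u^{(N)}|$, which come from the drift perturbing the Gaussian part. It also requires balancing, by Hölder with exponent choices tied to $p>4/\gamma$, the singular time weight against the probabilistic decay. I would first try splitting on the event $\{|\nabla P_NZ^{(\alpha)}|\le\lambda\}$ and optimising $\lambda$, using the Gaussian density bound; this is what produces the characteristic exponent $\frac{2\alpha}{p+2}$.
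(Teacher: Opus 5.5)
Your overall architecture is sound, but Step (i), as you describe it, would not produce the factor $N^{-2+\gamma}$. Measuring $f(\nabla u^{(N)})$ in a positive Sobolev norm and using $\|Q_N\|_{L(\mathcal H^{-1+\gamma},\mathcal H^{-1})}\le CN^{-\gamma}$ requires spatial regularity of $f(\nabla u^{(N)})$ that is not available uniformly in $N$. The reason is that $\nabla u^{(N)}$ contains $\nabla P_NZ^{(\alpha)}$, whose $\mathcal H^{s}$-norms grow like $N^{s+\alpha}$. Even granting that regularity, pairing this trade-off with the $\mathcal H^{-1}\to\mathcal H^1$ smoothing (pole $s^{-1/2}$) only gives $N^{-\gamma}$. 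Reaching $N^{-2+\gamma}$ this way would need $f(\nabla u^{(N)})$ bounded in $\mathcal H^{2-\gamma}$. Also, the pole $s^{-1+\gamma/4}$ you quote belongs to $\mathcal H^{-3+\gamma}\to\mathcal H^1$, not to the spaces in your trade-off.

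\textbf{The missing idea is to move the derivatives the other way.} Use only $\|f(\nabla u^{(N)}(s))\|_{L^2}$. Let $Q_N$ map $\mathcal H^{-1}$ into the \emph{weaker} space $\mathcal H^{-3+\gamma}$, where $\|Q_N\|_{L(\mathcal H^{-1},\mathcal H^{-3+\gamma})}\le CN^{-2+\gamma}$. Then let the fourth-order semigroup recover the lost derivatives: $\|e^{sA}\|_{L(\mathcal H^{-3+\gamma},\mathcal H^1)}\le Cs^{-1+\gamma/4}$. H\"older in time with exponent $\frac{p}{p-1}$ is admissible iff $(1-\frac{\gamma}{4})\frac{p}{p-1}<1$, i.e.\ $p>\frac{4}{\gamma}$; this is where that hypothesis enters. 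What remains is $(\mE\int_0^T\|f(\nabla u^{(N)}(s))\|_{L^2}^p\dd s)^{1/p}\le CN^{-\frac{2\alpha}{p+2}}$. The same pairing gives the drift increment $h^{\frac12-\frac{\gamma}{4}}N^{-\frac{2\alpha}{p+2}}$ in your Step (ii). There, $e^{hA}-\operatorname{Id}\colon\mathcal H^{3-\gamma}\to\mathcal H^1$ has size $h^{\frac12-\frac{\gamma}{4}}$, and $e^{sA}\colon\mathcal H^{-1}\to\mathcal H^{3-\gamma}$ has pole $s^{-1+\frac{\gamma}{4}}$.

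\textbf{Second, the estimate you single out as the main obstacle is already in the paper and, as you state it, false.} The decay cannot hold uniformly on $[0,T]$. Since $u^{(N)}(0)=u_0$ and $P_NZ^{(\alpha)}(0)=0$, for small $t$ the Gaussian gradient has not yet become large, and $\nabla u^{(N)}$ passes through the region where $|f|$ is of order one. \Cref{Conva:Ratealpha} gives the moment bound only for $t\ge t_N=N^{-\frac{2\alpha p}{p+2}}$. \Cref{cor:vanolin} then absorbs the layer $[0,t_N]$ via $|f|\le 1$, because $t_N^{1/p}=N^{-\frac{2\alpha}{p+2}}$. In your argument the decay only enters inside time integrals after H\"older, so this $L^p([0,T]\times\T^2)$ bound is all you need.

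With these corrections your proof goes through, using a split slightly different from the paper's. The paper combines $\mathfrak F(u^{(N)}(s))-\mathfrak F(u_h^{(N)}(s))$ (handled by Gr\"onwall) with the time regularity of $u_h^{(N)}$. You instead put the time defect on $u^{(N)}$ and the propagated error at grid points. Your version is valid: Henry--Gr\"onwall still closes, since $\|e_h^{(N)}(\floor{s}_h)\|_{\mathcal H^1}\le\sup_{\tau\le s}\|e_h^{(N)}(\tau)\|_{\mathcal H^1}$. It also needs the vanishing-nonlinearity bound only for $u^{(N)}$ itself, not transferred to $u_h^{(N)}$. In particular, the extra $+h$ does not arise in your decomposition. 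In the paper it comes from requiring $\floor{s}_h\ge t_N$; keeping it in your bound is harmless.
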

		
		\noindent
		Similarly to the vanishing nonlinearity result (see \cite[Theorem~3.2]{Rimmele2026}), we present a related theorem demonstrating that the Euler discretisation of the spectral Galerkin approximation $u_h^{(N)}$ converges to the OU-process $u$. In the limit, the function does not exhibit linear instability, and the surface remains without a rough hill structure.
		\begin{theorem}
			 \label {cor:main}
		Let $T>0$,  $p \geq 1$, $\varepsilon > 0 $, $\alpha \in (0,1)$,   and $u_0 \in \mathcal{H}^{1 + \eta} \left(\T^2 \right) \cap W^{1 , \infty} \left(\T^2 \right)$ with $\eta \in (0,4)$.
		Let $u^{(N)}_h$, as defined in \eqref{eq:Euler-uN}, and $u$ denote the mild solution to \eqref{eq:SPDE-ou}.
		Then, we have
			\begin{align}
				 \label {est:main-cor}
                 \lim_{N \to \infty}
				  \lim_{h\rightarrow0}  \mE \sup_{t\in[0,T]} \left\|u_h^{(N)}(t)-u(t) \right\|_{\mathcal{H}^{1- \alpha - \varepsilon}}^p=0 .
			\end{align}
		\end{theorem}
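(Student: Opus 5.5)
The plan is to split the error by the triangle inequality,
\[
\|u_h^{(N)}-u\|_{\mathcal{H}^{1-\alpha-\varepsilon}}
\leq \|u_h^{(N)}-u^{(N)}\|_{\mathcal{H}^{1}} + \|u^{(N)}-u\|_{\mathcal{H}^{1-\alpha-\varepsilon}},
\]
and to treat the two pieces separately. The first piece uses the embedding $\mathcal{H}^1\hookrightarrow\mathcal{H}^{1-\alpha-\varepsilon}$. The second piece is independent of $h$. It therefore suffices to show two things. First, for each fixed $N$, $\lim_{h\to0}\mE\sup_t\|u_h^{(N)}-u^{(N)}\|_{\mathcal{H}^1}^p=0$. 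Second, $\lim_{N\to\infty}\mE\sup_t\|u^{(N)}-u\|_{\mathcal{H}^{1-\alpha-\varepsilon}}^p=0$.

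For arbitrary $p\geq1$ we may enlarge $p$ by Jensen/Hölder, since $L^{p'}(\Omega)$ controls $L^p(\Omega)$ for $p'\ge p$. So assume $p>4/\gamma$ for some fixed $\gamma\in(0,2)$, and choose $\varepsilon'\in(0,\frac12-\frac1p)$.

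For the first limit I invoke \Cref{thm:mainEuler}. With $N$ fixed, every term on the right-hand side either contains a positive power of $h$, or is the $N$-only term $N^{-\eta}\|u_0\|_{\mathcal{H}^{1+\eta}}+N^{-2+\gamma-\frac{2\alpha}{p+2}}$. The $h$-terms are $h^{\eta/4}$, $N^{2+\alpha}h^{\frac12-\frac1p-\varepsilon'}$ and $h^{\frac12-\frac\gamma4}[N^{-\frac{2\alpha}{p+2}}+h]$, and all of them vanish as $h\to0$. Hence
\[
\limsup_{h\to0}\Big(\mE\sup_t\|u_h^{(N)}-u^{(N)}\|_{\mathcal{H}^1}^p\Big)^{1/p}\leq C\big(N^{-\eta}+N^{-2+\gamma-\frac{2\alpha}{p+2}}\big),
\]
which tends to $0$ as $N\to\infty$ because $\gamma<2$. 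The iterated limit is thus fine even though we do not get exactly zero for fixed $N$.

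For the second limit, write
\[
u^{(N)}-u=\int_0^t e^{(t-s)A}\mathfrak F(u^{(N)}(s))\,\dd s - \sigma(I-P_N)Z^{(\alpha)}(t).
\]
The tail term is handled by the regularity \eqref{rougher:noise}: $Z^{(\alpha)}\in C([0,T],\mathcal{H}^{1-\alpha-\varepsilon/2})$ with all moments finite. Then $\|(I-P_N)Z^{(\alpha)}\|_{\mathcal{H}^{1-\alpha-\varepsilon}}\leq N^{-\varepsilon/2}\|Z^{(\alpha)}\|_{\mathcal{H}^{1-\alpha-\varepsilon/2}}$ uniformly in $t$, so this term vanishes in $L^p(\Omega;C)$.

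The main obstacle is the nonlinear term, i.e. the vanishing-nonlinearity statement. Here I would rely on the results of \Cref{sec:AR}, namely the divergence rate of $\|P_NZ^{(\alpha)}\|_{\mathcal{H}^1}$ and the associated decay $\mE\|f(\nabla u^{(N)})\|^q\lesssim N^{-c\alpha}$ (the source of the $N^{-\frac{2\alpha}{p+2}}$ factors). The analogue of \cite[Theorem~3.2]{Rimmele2026} then gives
\[
\mE\sup_t\Big\|\int_0^t e^{(t-s)A}\mathfrak F(u^{(N)})\,\dd s\Big\|_{\mathcal{H}^1}^p\to0.
\]
The key point is that $|f|\le \frac12$ is bounded, so $\mathfrak F(u^{(N)})$ lies in $\mathcal{H}^{-1}$ uniformly. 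Smoothing of $e^{tA}$ costs an integrable singularity $(t-s)^{-1/2}$, and the smallness comes from $f(\nabla u^{(N)})\to0$ in $L^q(\Omega\times[0,T]\times\T^2)$ as $|\nabla P_NZ^{(\alpha)}|$ blows up. Dominated convergence then upgrades this to the $\sup_t$ moment.

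Combining the two pieces, and using $\mathcal{H}^1\hookrightarrow\mathcal{H}^{1-\alpha-\varepsilon}$ for the nonlinear part, yields \eqref{est:main-cor}.
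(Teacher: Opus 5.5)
Your proposal is correct and follows the paper's route. It uses the triangle inequality, then \Cref{thm:mainEuler} for $u_h^{(N)}-u^{(N)}$ (after raising $p$ above $4/\gamma$ and returning to general $p$ via H\"older), then the vanishing nonlinearity of \Cref{cor:vanolin} in the mild formulation for $u^{(N)}-u$. You also make explicit points the paper's sketch leaves implicit: the cancellation of the $e^{tA}u_0$ terms, the tail bound $\|(I-P_N)Z^{(\alpha)}\|_{\mathcal{H}^{1-\alpha-\varepsilon}}\leq CN^{-\varepsilon/2}\|Z^{(\alpha)}\|_{\mathcal{H}^{1-\alpha-\varepsilon/2}}$, and that after the inner $h\to0$ limit only the terms $C\bigl(N^{-\eta}+N^{-2+\gamma-\frac{2\alpha}{p+2}}\bigr)$ of \Cref{thm:mainEuler} remain, which vanish as $N\to\infty$ because $\gamma<2$.
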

		
		\begin{proof}[Sketch of Proof]
        By the triangle inequality, \Cref{thm:mainEuler}, and the vanishing of the nonlinearity from \Cref{cor:vanolin}, 
            we obtain  \eqref{est:main-cor} for $p > \frac{4}{\gamma}$ such that~$\frac{p}{p-1} \in \big(1 , \frac{1}{1-\frac{\gamma}{4}} \big)$.
			Applying Hölder's inequality, this establishes the desired result for each $p \geq 1$.
		\end{proof}

        Note that \Cref{cor:vanolin} is the key to extend the convergence of $u^{(N)}$ to $u$ from the case $\alpha=0$ treated in \cite{BR} 
        to the case of positive $\alpha$ treated here.
        This is straightforward from the mild formulation and verbatim as in \cite{BR}, so we do not give the details here. 
        
       Let us also remark that we could derive a rate in  \Cref{est:main-cor}, but as this is very poor for small $\alpha$ we skip details here.
     We also believe that it would be possible to improve the rate in \Cref{thm:mainEuler} using techniques  like stochastic Sewing Lemma, but in as the rate in \Cref{est:main-cor} is poor, we only present a significantly simpler proof here. 
		
		\section{Auxiliary Results} \label{sec:AR}

		In this section, we present auxiliary results to prove \Cref{thm:mainEuler} and \Cref{cor:main}.
		Specifically, we demonstrate the divergence rate of stochastic convolution in \Cref{convrate:Zalpha} to prove the convergence rate for the vanishing nonlinearity in \Cref{Conva:Ratealpha}, and the Hölder continuity in time in \Cref{lem:Z-beta}.
		\begin{lemma}  \label[lemma]{convrate:Zalpha}
			The stochastic convolution exhibits the following growth estimate:
			\begin{align*}
				\E
				\left \|
				P_{N} Z^{(\alpha)}(t)
				\right \|_{\mathcal{H}^1}^2
				& \asymp 
				\begin{cases}
					\ln \left(N\right), &  \text{if $\alpha =0$} \\
					N^{2 \alpha}, &  \text{if $\alpha \in (0,1)$}
				\end{cases}
				\end{align*}
			for each $t \in (0,T]$.
		\end{lemma}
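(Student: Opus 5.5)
Since everything is diagonal in the Fourier basis, the plan is to compute the second moment explicitly and then reduce to a lattice-point sum. With the $\mathcal{H}^1$ norm given (up to equivalence, see Definition \ref{def:Halpha}) by $\|u\|_{\mathcal{H}^1}^2 \asymp \sum_k \mu_k |u_k|^2$, orthonormality of $(e_k)$ and independence of the $\beta_k$ yield
\begin{align*}
\E \big\| P_N Z^{(\alpha)}(t) \big\|_{\mathcal{H}^1}^2
\asymp \sum_{0<|k|\leq N} \mu_k^{1+\alpha}\, \E |Z_k(t)|^2 .
\end{align*}
The Itô isometry then gives
\begin{align*}
\E|Z_k(t)|^2 = \int_0^t e^{-2(t-s)\mu_k^2}\dd s = \frac{1-e^{-2t\mu_k^2}}{2\mu_k^2}.
\end{align*}
(If $Z_k$ is complex-valued with the usual pairing $\beta_{-k}=\overline{\beta_k}$, only a factor $2$ changes.)

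The next step is to bound the time factor uniformly in $k$ for fixed $t\in(0,T]$. Since $\mu_k \geq \mu_{\min}>0$ for $k\neq 0$, we have
\begin{align*}
1-e^{-2t\mu_{\min}^2} \leq 1-e^{-2t\mu_k^2}\leq 1.
\end{align*}
Therefore
\begin{align*}
\E \big\| P_N Z^{(\alpha)}(t) \big\|_{\mathcal{H}^1}^2 \asymp \sum_{0<|k|\leq N} \mu_k^{\alpha-1},
\end{align*}
with constants depending on $t$ (and $T$, $L$) but not on $N$. Using $\mu_k\asymp |k|^2$, this sum is comparable to $\sum_{0<|k|\leq N}|k|^{2\alpha-2}$.

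It remains to evaluate this lattice sum in $d=2$. I would group the $k$ into dyadic annuli $2^{j}\leq |k|<2^{j+1}$, each containing $\asymp 2^{2j}$ points, or equivalently compare with the integral $\int_1^N r^{2\alpha-2}\, r\dd r$. Each annulus contributes $\asymp 2^{2j\alpha}$.
\begin{itemize}
\item For $\alpha=0$ every annulus contributes $\asymp 1$. There are $\asymp \ln N$ annuli, which gives the $\ln(N)$ behaviour.
\item For $\alpha\in(0,1)$ the contributions form a geometric series dominated by the last annulus. This gives $\asymp N^{2\alpha}$, with constants depending on $\alpha$ that blow up as $\alpha\to0$.
\end{itemize}

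The only mildly delicate point is the lower bound, which needs the two-sided lattice-point count. We need $\#\{k\in\Z^2: 2^j\leq|k|<2^{j+1}\}\asymp 2^{2j}$ for all $j\geq0$, which follows from comparing with areas of discs (Gauss circle estimate $\#\{|k|\leq R\}=\pi R^2+O(R)$). For the lower bound in $N$ one also restricts to $N\geq 2$, so that $\ln N>0$. This step is the main obstacle, and it is elementary.
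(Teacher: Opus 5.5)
Your proposal is correct and follows the paper's own proof. You use It\^o's isometry to reach $\sum_{0<|k|\leq N}\frac{\mu_k^{\alpha-1}}{2}\bigl(1-e^{-2t\mu_k^2}\bigr)$, reduce to the lattice sum $\sum_{0<|k|\leq N}|k|^{2\alpha-2}$, and compare it with $\int_1^N r^{2\alpha-1}\dd r$; your dyadic-annulus count is just a discrete version of the paper's polar-coordinate comparison. You also spell out two things the paper leaves inside a single $\asymp$: the $t$-dependent two-sided bound on the time factor, and the two-sided lattice-point count.

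One small slip: for $\alpha=0$ it is the upper bound, not the lower bound, that fails at $N=1$, since the left side is positive while $\ln 1=0$. So $N\geq 2$ is needed for the upper estimate.
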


        Let us remark that we would obtain a uniform in $t\in[0,T]$ upper bound in \Cref{convrate:Zalpha}, but no uniform lower bound as $P_{N} Z^{(\alpha)}(0)=0$
		 
		\begin{proof}
			For each $t \in (0,T]$, we derive the following growth behavior by applying It\^o's isometry and using the independence of the family $\left(\beta_k \right)_{k \in \ZZ}$ in the first equation below
			\begin{align*}
				\E
				\left \|
				P_{N} Z^{(\alpha)}(t)
				\right \|_{\mathcal{H}^1}^2
				&	=
				\sum_{ 0 < |k| \leq N}
				\mu_k^{1+\alpha}
				\int_0^t e^{-2(t-s) \mu_k^2} \dd s 
                =
				\sum_{ 0 < |k| \leq N}
				\frac{\mu_k^{\alpha}}{2 \mu_k} 
				\left[1- e^{-2t \mu_k^2} \right] \nonumber
				\\
				&	
                \asymp 
                \sum_{ 0 < |k| \leq N}
				|k|^{2\alpha -2}
                \asymp 
				\int_1^{N}
                x^{2 \alpha - 1}
				  \dd x  
                \asymp 
				\begin{cases}
					\ln \left(N\right), &  \text{if $\alpha =0$} \\
					N^{2 \alpha}, &  \text{if $\alpha \in (0,1)$}.
				\end{cases}
			\end{align*}

		\end{proof}

			\begin{lemma} \label[lemma]{lem:Z-beta}
			For any $p\geq1$, $\eta \in(0,1]$, and $\psi \in (0,1]$ there exists a constant $C = C(p,T, \eta , \psi, L , \alpha) >0$ such that for any $0\leq s\leq t\leq T$ the following  bounds hold
			\begin{align*} 
				 \left(\mE  \left \| P_{N} Z^{(\alpha)}(t)-P_{N} Z^{(\alpha)} (s) \right \|^p_{C^0 \left(\T^2 \right)}  \right)^\frac{1}{p}
				& \leq
				C  |t-s|^{\frac{\psi}{2}} N^{\alpha + \eta + 2 \psi   } ,
				\\
				 \left(\mE \left \| P_{N} Z^{(\alpha)}(t)-P_{N} Z^{(\alpha)} (s) \right \|^p_{C^1 \left(\T^2 \right)} \right)^\frac{1}{p}
				& \leq 
				C  |t-s|^{\frac{\psi}{2}} N^{\alpha + \eta + 2 \psi +1 },
                \\
                \left(\mE  \left \| P_{N} Z^{(\alpha)}(t)-P_{N} Z^{(\alpha)} (s) \right \|^p_{\mathcal{H}^1 \left(\T^2 \right)} \right)^\frac{1}{p}
				& \leq
				C  |t-s|^{\frac{\psi}{2}} N^{\alpha   + 2 \psi   } .
			\end{align*}
		\end{lemma}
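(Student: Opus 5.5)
The plan is to reduce everything to second-moment computations of Gaussian Fourier coefficients and then pass to $p$-th moments by Gaussian hypercontractivity. The increment $P_N Z^{(\alpha)}(t)-P_N Z^{(\alpha)}(s)$ is a centered Gaussian field, so its $L^p(\Omega)$ norms are controlled by its $L^2(\Omega)$ norms. This handles the $\mathcal{H}^1$ bound directly. For the sup norms we first go through a Sobolev norm and then use the finite-dimensionality of the range of $P_N$.

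\textbf{Step 1: coefficient increments.} Write, for $s\le t$,
\[
Z_k(t)-Z_k(s)=\bigl(e^{-(t-s)\mu_k^2}-1\bigr)Z_k(s)+\int_s^t e^{-(t-u)\mu_k^2}\dd\beta_k(u).
\]
By It\^o's isometry,
\[
\E|Z_k(t)-Z_k(s)|^2\le \frac{(1-e^{-(t-s)\mu_k^2})^2}{2\mu_k^2}+\frac{1-e^{-2(t-s)\mu_k^2}}{2\mu_k^2}.
\]
Using $1-e^{-x}\le \min(1,x)\le x^{\psi}$ for $\psi\in(0,1]$, this gives
\[
\E|Z_k(t)-Z_k(s)|^2\le C|t-s|^{\psi}\mu_k^{2\psi-2}.
\]

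\textbf{Step 2: the $\mathcal{H}^1$ bound.} By independence of the $\beta_k$,
\[
\E\|P_N Z^{(\alpha)}(t)-P_NZ^{(\alpha)}(s)\|_{\mathcal{H}^1}^2=\sum_{0<|k|\le N}\mu_k^{1+\alpha}\,\E|Z_k(t)-Z_k(s)|^2\le C|t-s|^{\psi}\sum_{0<|k|\le N}|k|^{2\alpha+4\psi-2}.
\]
The sum is $\le C N^{2\alpha+4\psi}$, comparing with $\int_1^N x^{2\alpha+4\psi-1}\dd x$ as in \Cref{convrate:Zalpha}, since the exponent $2\alpha+4\psi>0$. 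The increment lies in a Gaussian space (first Wiener chaos), so Gaussian moment equivalence, or hypercontractivity for the $\mathcal{H}^1$-valued Gaussian vector, upgrades this to $p$-th moments with a constant depending only on $p$. This gives the third bound, $C|t-s|^{\psi/2}N^{\alpha+2\psi}$.

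\textbf{Step 3: the $C^0$ and $C^1$ bounds.} This is the step where the small loss $N^{\eta}$ enters and which needs the most care. Apply the Sobolev embedding $\mathcal{H}^{1+\eta}(\T^2)\hookrightarrow C^0(\T^2)$, valid since $1+\eta>d/2=1$. Then bound the $\mathcal{H}^{1+\eta}$ norm of a function in the range of $P_N$ by $N^{\eta}$ times its $\mathcal{H}^1$ norm, since $\mu_k^{\eta/2}\le CN^{\eta}$ for $|k|\le N$. Together with Step 2 this yields the first estimate.

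For $C^1$, apply the same argument to $\nabla$ of the increment. Bound $\|\nabla P_N v\|_{C^0}\le C\|P_N v\|_{\mathcal{H}^{2+\eta}}\le CN^{1+\eta}\|P_Nv\|_{\mathcal{H}^1}$, which gives the extra factor $N$. The constant in the embedding blows up as $\eta\to0$, which explains the dependence $C=C(\eta)$.

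If one prefers not to invoke the deterministic inverse inequality, an alternative is to estimate $\E\|\cdot\|_{\mathcal{H}^{1+\eta}}^2$ directly as in Step 2. This gives a sum with exponent $2\alpha+2\eta+4\psi$ and the same power of $N$, and Gaussianity of the $\mathcal{H}^{1+\eta}$-valued increment again lifts the bound to $p$-th moments.
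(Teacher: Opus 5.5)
Your proposal is correct and is essentially the paper's proof. It uses the same ingredients: the coefficient bound $\mathbb{E}|Z_k(t)-Z_k(s)|^2\le C|t-s|^{\psi}\mu_k^{2\psi-2}$ (which you derive directly, whereas the paper quotes it), the comparison of the lattice sum with $\int_1^N x^{2\alpha+4\psi-1}\dd x$, Gaussian hypercontractivity, and the embedding $\mathcal{H}^{1+\eta}\hookrightarrow C^0(\mathbb{T}^2)$. The only difference is cosmetic: for the $C^0$ and $C^1$ bounds you go through the inverse inequality $\|P_Nv\|_{\mathcal{H}^{1+\eta}}\le CN^{\eta}\|P_Nv\|_{\mathcal{H}^1}$, while the paper sums the $\mathcal{H}^{1+\eta}$ and $\mathcal{H}^{2+\eta}$ second moments directly, which is exactly the alternative you describe at the end.
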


		\begin{proof}
        Arguing as in the proof of \cite[Theorem~2.21]{Rimmele2026} (see \Cref{lem:Z-C0} for the result), we obtain  that for each $\psi \in (0,1]$ there is a constant $C_\psi>0$ such that
			\begin{align*}
				\mathbb{E}
				\left| 
				Z_k(t) - Z_k(s)
				\right|^2
				\leq
				C_\psi
				\mu_k^{2\psi - 2} | t-s |^\psi
			\end{align*}
			holds for each $t,s \in [0,T]$.
			Furthermore, for $p \geq 1$, we derive from Gaussian hypercontractivity (see \cite[Theorem~2.25]{Rimmele2026}) the first upper bound below, and thus obtain 
			\begin{align*}
					\mE  \left \| P_{N} Z^{(\alpha)}(t)-P_{N} Z^{(\alpha)} (s) \right \|^p_{C^0} 
				& \leq C_{p , \eta} \left(
				\mE  \left \| P_{N} \left( Z^{(\alpha)}(t) - Z^{(\alpha)}(s) \right) \right \|^2_{\mathcal{H}^{1 + \eta}}  \right)^\frac{p}{2} 
				\\ 
				& \leq C_{p , \eta}
				\left(
				\sum_{ 0 < |k| \leq N} \mu_k^{\alpha + \eta + 1} \ 
				\mathbb{E}
				\left| 
				Z_k(t) - Z_k(s)
				\right|^2
				\right)^\frac{p}{2}
				\\ 
				& \leq C_{\psi, p , \eta}
				\left(
				\sum_{ 0 < |k| \leq N}
				\mu_k^{\alpha + \eta + 2\psi - 1} | t-s |^\psi
				\right)^\frac{p}{2}
				\\ 
                & \leq C_{L, \psi, p , \eta}
				\left(
				\sum_{ 0 < |k| \leq N}
				|k|^{2 \alpha + 2\eta + 4\psi - 2} | t-s |^\psi
				\right)^\frac{p}{2}
				\\ 
				& \leq C_{L, \psi, p , \eta}
				\left(
				| t-s |^\psi \Big(1+
				\int_1^{N} 
				\tau^{2 \alpha + 2\eta + 4\psi - 1} \dd \tau \Big)
				\right)^\frac{p}{2}
				\\ 
				& \leq C_{L, \psi, p , \eta}
				| t-s |^{\frac{\psi}{2}p}
				N^{(\alpha + \eta + 2 \psi)p   },
			\end{align*}
            where we used a comparison principle for Riemannian integrals and sums together with polar coordinates. 
			
			Similarly as above, we directly derive
			\begin{align*}
					\mE  \left \| P_{N} Z^{(\alpha)}(t)-P_{N} Z^{(\alpha)} (s) \right \|^p_{C^1}  
				& \leq C_{p , \eta} 
				\left( \mE  \left \| P_{N} \left( Z^{(\alpha)}(t) - Z^{(\alpha)}(s) \right) \right\|^2_{\mathcal{H}^{2+ \eta}} \right)^\frac{p}{2} 
				 \\ & \leq C_{\psi, p , \eta}
				| t-s |^{\frac{\psi}{2} p}
				N^{(\alpha + \eta + 2 \psi + 1)p}.
			\end{align*}
Similarly, we obtain the asserted $\mathcal{H}^1$-estimate.
		This confirms the claim.
		\end{proof}

		  \subsection{Convergence Rate for the Rougher Noise Case}
		In this subsection we consider the rate for the vanishing of the nonlinearity in the case of $\alpha \in (0,1)$, i.e. noise that is rougher than the regular white noise.
		Recall~$Z^{(\alpha)}$ from \eqref{rougher:noise} and $u^{(N)}$ from \eqref{def:uN}.

		\begin{lemma}  \label[lemma]{Conva:Ratealpha}
       Let $T>0$, $p\geq1$, $\alpha\in(0,1)$, and $u_0\in
        \mathcal H^{1+\eta}(\T^2)
        \cap W^{1,\infty}(\T^2)$ for $ \eta\in(0,4)$.
       For $N\in\mathbb N$, define $$t_N\coloneqq N^{-\frac{2\alpha p}{p+2}}. 
       $$ 
			Let  $f(z)\coloneqq \frac{z}{1+|z|^2}, z\in\mR^2
            $.
            Then, there is a constant $C = C\left(\alpha , p , \sigma, L, T, \left \| u_0 \right \|_{W^{1 , \infty}} \right) >0$  and $N_0\in\mathbb N$ such that,  for all sufficiently large $N\geq N_0$, we have
    \begin{align}
        \sup_{t\in[t_N,T]}
        \sup_{x\in\T^2}
        \mathbb E
            \left|
            f\left(\nabla u^{(N)}(t,x)\right)
            \right|^p
        \leq
        C N^{-\frac{2\alpha p}{p+2}}.
        \label{est:nonlinear-no}
    \end{align} 
		\end{lemma}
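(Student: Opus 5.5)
Throughout we write $v^{(N)}(t)\coloneqq u^{(N)}(t)-\sigma P_N Z^{(\alpha)}(t)$, so that $\nabla u^{(N)}(t,x)=\nabla v^{(N)}(t,x)+\sigma\nabla P_NZ^{(\alpha)}(t,x)$. The idea is that $\nabla v^{(N)}$ stays bounded uniformly in $N$, while the Gaussian part $\nabla P_NZ^{(\alpha)}(t,x)$ has variance of order $N^{2\alpha}$. Since $|f(z)|\le\min\{1,|z|^{-1}\}$, the quantity $|f(\nabla u^{(N)})|$ can only be large on the event where a Gaussian of huge variance lands in a bounded ball, and that event has small probability.

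\textbf{Step 1: a deterministic bound on $v^{(N)}$.} We have $|f|\le 1/2$, and $v^{(N)}$ solves the mild equation $v^{(N)}(t)=e^{tA}u_0+\int_0^te^{(t-s)A}\mathfrak F(u^{(N)}(s))\dd s$. The smoothing estimate $\|\nabla e^{rA}\nabla\cdot g\|_{L^\infty}\le C r^{-1/2}\|g\|_{L^\infty}$, which holds because the heat kernel of $-\Delta^2$ satisfies $\|\nabla^2 K_r\|_{L^1}\le Cr^{-1/2}$, then gives
\begin{align*}
\|\nabla v^{(N)}(t)\|_{L^\infty}\le C\|u_0\|_{W^{1,\infty}}+C\int_0^t(t-s)^{-1/2}\dd s\le K.
\end{align*}
This holds pathwise, with $K$ independent of $N$ and $\omega$.

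\textbf{Step 2: the variance of the Gaussian gradient.} Fix $x$ and $t$. Each component of $G\coloneqq\sigma\nabla P_NZ^{(\alpha)}(t,x)\in\R^2$ is a centred Gaussian. Arguing as in \Cref{convrate:Zalpha}, and using that $|\nabla e_k|^2$ is of order $\mu_k$ uniformly in $x$, its covariance matrix is of order $\Sigma_t\asymp\sum_{0<|k|\le N}\mu_k^{\alpha}\mu_k^{-1}(1-e^{-2t\mu_k^2})\,I$. For $t\ge t_N$ the factor $1-e^{-2t\mu_k^2}$ is at least $1/2$ whenever $|k|\gtrsim t^{-1/4}$. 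Since $t_N^{-1/4}=N^{\alpha p/(2(p+2))}\ll N$, the high modes $|k|\in[N/2,N]$ alone give a lower bound $\Sigma_t\ge cN^{2\alpha}I$, uniformly in $t\in[t_N,T]$ and $x$, once $N\ge N_0$. Upper bounds are not needed.

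\textbf{Step 3: splitting on a small ball.} Choose a radius $R\ge 2K$ to be optimised below. On the event $\{|\nabla u^{(N)}|\ge R\}$ we have $|f(\nabla u^{(N)})|^p\le R^{-p}$. On the complementary event $|f|\le 1/2$, and since $|\nabla v^{(N)}|\le K$ we get $|G|\le R+K\le 2R$. This gives
\begin{align*}
\E|f(\nabla u^{(N)}(t,x))|^p\le R^{-p}+\mathbb P(|G|\le 2R)\le R^{-p}+C\frac{R^2}{N^{2\alpha}}.
\end{align*}
The last inequality uses that a two-dimensional Gaussian with covariance at least $cN^{2\alpha}I$ has density bounded by $C N^{-2\alpha}$. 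Balancing the two terms, $R^{p+2}=N^{2\alpha}$, yields $C N^{-2\alpha p/(p+2)}$, which is exactly the claimed rate, uniform in $t\in[t_N,T]$ and $x\in\T^2$.

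The main obstacle is that $G$ is not independent of $\nabla v^{(N)}$, so we cannot condition naively. This is why Step 1 must give a \emph{deterministic} bound $K$: it lets us replace $\nabla u^{(N)}$ by the shifted event $\{|G|\le 2R\}$, where $G$ is purely Gaussian. The small-time restriction $t\ge t_N$ is needed only for the lower variance bound in Step 2. If the $L^\infty$ smoothing estimate in Step 1 proves delicate, the first thing to try is a Schauder/kernel estimate for $e^{rA}$ on $\T^2$, obtained by comparison with the whole-space kernel.
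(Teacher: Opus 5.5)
Your proof is correct and is essentially the paper's argument. Both use a pathwise, $N$-independent bound on $\nabla u^{(N)}-\sigma\nabla P_NZ^{(\alpha)}$: you derive it directly from the $L^\infty$ kernel estimate for $\nabla e^{rA}\nabla\cdot$, whereas the paper imports it from \cite[Theorem~3.9]{Rimmele2026}. Both then obtain a lower bound of order $N^{2\alpha}$ on the covariance of the Gaussian gradient from high modes for $t\geq t_N$, and split at the threshold $N^{2\alpha/(p+2)}$ using the Gaussian density bound; splitting on $|\nabla u^{(N)}|$ rather than on $|\nabla P_NZ^{(\alpha)}|$, as the paper does, is immaterial.

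The one point to repair is the justification in Step~2. For the paper's real sine/cosine basis, $|\nabla e_k(x)|^2\asymp\mu_k$ fails pointwise (e.g.\ $\nabla e_k(0)=0$ for $k_1,k_2>0$), and a scalar bound would in any case only control the trace of the rank-one matrix $\nabla e_k\nabla e_k^{T}$. What you actually need is that the sum of $\nabla e_k(x)\nabla e_k(x)^{T}$ over the modes $(\pm k_1,\pm k_2)$ is an $x$-independent multiple of $\mathrm{diag}(k_1^2,k_2^2)$. This is the $x$-independence of the covariance that the paper takes from \cite[Lemma~3.10]{Rimmele2026}, and with it your annulus sum gives $\Sigma_t\geq cN^{2\alpha}I$.
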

        
			\begin{proof}
            For this proof, we assume without loss of generality that $\sigma=1$. 
			This proof is based directly on the facts that the covariance operators of $P_N Z (t,x)$ and $\nabla P_N Z (t,x)$ do not depend on $x \in \T^2$ (see \cite[Lemma~3.10]{Rimmele2026}), and 
            that nonlinearity vanishes
            	\begin{align*}
			\sup_{x \in \T^2 } 
			\mathbb{E}  \left | 
			f \left(\nabla u^{(N)} (t,x) \right) 
			\right |^p   \xrightarrow{N \to \infty}  0   \qquad
            \text{ for each $p \geq 1$ and $t \in (0,T]$,}
		\end{align*}
            (see \cite[Theorem~3.11]{Rimmele2026}).
			In \cite[Theorem~3.9]{Rimmele2026} it is shown that there is a uniform bound $\hat{M}>0$ such that 
            $$ \left | \nabla v^{(N)} (\omega , t , x)  \right | \leq \hat{M},
            \qquad \text{ for each } \omega \in \Omega , \ t \in [0,T] , \ x \in \T^2,
            $$
            where 
            $$
            v^{(N)}  \coloneqq 
            u^{(N)}  -  \sigma   P_{N} Z^{(\alpha)}
            - e^{t A} u_0.
            $$
            However, for $u_0 \in W^{1 , \infty }$ there exists a constant $\tilde{M} >0$ such that
            $
            \sup_{t \in [0,T]} \left \| \nabla  e^{t A} u_0  \right \|_{\infty} \leq \tilde{M}
            $ 
            and thus we obtain
            $$ \left | \nabla u^{(N)} (\omega , t , x)  -  \sigma  \nabla P_{N} Z^{(\alpha)} (\omega , t , x)  \right | \leq \hat{M} + \tilde{M} \eqqcolon M,
            \qquad \text{ for each } \omega \in \Omega , \ t \in [0,T] , \ x \in \T^2.
            $$
            Set
			$\vartheta \coloneqq \frac{1}{p+2} \in \left(0, \frac{1}{2} \right)$
			and
			$K_{N} \coloneqq \max \{ N^{2\alpha \vartheta} ,  2 M \}.
			$
			Furthermore,  from \cite[Lemma~3.10]{Rimmele2026}, we have that the covariance matrix 
			$$
			\Sigma_{N}(t,x)
			\coloneqq  \operatorname{Cov} \left(
			\frac{ \nabla P_{N} Z^{(\alpha)}(t,x)}{K_{N}  }
			\right)
			$$
			does not depend on the variable~$x \in \T^2$.
            We define
            $$
            \mathcal{R}_N \coloneqq \left \{
            k \in \ZZ \ : \ \frac{N}{4} \leq k_1 , k_2 \leq \frac{N}{2}
            \right \} \subset \left \{
            k \in \ZZ \ : \  | k | \leq N
            \right \}.
            $$ 
            Note that $\#  \mathcal{R}_N \asymp N^2$ and  for every $k \in \mathcal{R}_N$ we have 
            \begin{align*}
                \frac{	\mu_{k}^{\alpha} k_1^2}{|k|^4} \gtrsim N^{2 \alpha -2} \qquad \text{ and } 
                \qquad
                \mu_{k}^{2}   
                 \gtrsim N^{4}.
            \end{align*} 
        Furthermore, because of   $\alpha\in(0,1)$ and $p/(p+2)<1$, we have
        $
        \beta\coloneqq
        4-\frac{2\alpha p}{p+2}>2
        $.  Hence, for
        $t\in[t_N,T]$ and $k\in\mathcal R_N$,
        \[
        1-e^{-2t\mu_k^2}
        \geq
        1-e^{-cN^\beta}
        \geq
        1-e^{-c}
        \eqqcolon c_0>0.
        \] 
            Thus, we obtain, for each $t \in [t_N , T]$, 
            	\begin{align*}
				  \det 
				\left(
				\Sigma_{N}(t,x)
				\right) & 
				\gtrsim
				\frac{1}{\left(K_{N} \right)^4}
				\left[
				\sum_{k \in \ZZ,  |k| \leq {N}}
				\frac{	\mu_{k}^{\alpha} k_1^2}{|k|^4}
                \left[ 
                1 - e^{-2 t \mu_k^2}
                \right]
				\right]^2
				\\ &
				\gtrsim
				\frac{1}{\left(K_{N} \right)^4}
				\left[
				\sum_{k \in \mathcal{R}_N }
				\frac{	\mu_{k}^{\alpha} k_1^2}{|k|^4}
                \left[ 
                1 - e^{-2 t \mu_k^2}
                \right]
				\right]^2
				\\ &
                \gtrsim
				 N^{-8 \alpha \vartheta}
				\left[
				\sum_{k \in \mathcal{R}_N }
				N^{2 \alpha -2}
                \left[ 
                1 - e^{-2 c N^{4  - \frac{2 \alpha p}{p+2} } }
                \right]
				\right]^2
                \\ &
                \gtrsim
				 N^{-8 \alpha \vartheta}
				\left[
				\sum_{k \in \mathcal{R}_N }
				N^{2 \alpha -2} 
				\right]^2
                 \\ &
                \gtrsim
				 N^{4 \alpha -8 \alpha \vartheta}  .
			\end{align*}

			The nonlinearity thus satisfies, uniformly for each $t \in [t_N ,T]$ and $x \in \T^2$,  the following upper bound 
			\begin{align*}
				& \sup_{x \in \T^2 }   
				\mE  \left |
				f \left(
				\nabla u^{{N}} (t,x)
				\right)
				\right |^p
				\\	& \leq
				\sup_{x \in \T^2 }   
				\mE \left[ \left |
				f \left(
				\nabla u^{{N}} (t,x)
				\right)
				\right |^p   \right |
				\ \left . 
				\left | 
				\nabla P_{N} Z^{(\alpha)} (t,x)
				\right | > K_{N} 
				\right] \nonumber
				\\ & \quad +
				\sup_{x \in \T^2 }   
				\mE \left[ \left |
				f \left(
				\nabla u^{{N}} (t,x)
				\right)
				\right |^p \right | 
				\ \left . 
				\left | 
				\nabla P_{N} Z^{(\alpha)} (t,x)
				\right | \leq  K_{N} 
				\right] 
				\mathbb{P}
				\left(
				\left | 
				\nabla P_{N} Z^{(\alpha)} (t,x)
				\right | \leq  K_{N}
				\right)
				\nonumber
				\\ &
				\leq
				\left( \frac{2}{1 + K_{N}  - M} \right)^p 
				+ C \max_{y \in \overline{B_1(0) }} \{ \varphi_N (t,x, y) \} 
				\\ &
				\lesssim
				\left( \frac{4}{ K_{N} } \right)^p 
				+
				\frac{1}{\sqrt{\det \left(\Sigma_{N} (t,x) \right)}}  \nonumber
                \\ &
				\lesssim
				N^{-2 \alpha p \vartheta}
				+
				\frac{1}{\sqrt{\det \left(\Sigma_{N} (t,x) \right)}} , \nonumber
			\end{align*}
			whereby 
			$$\varphi_{N}(t,x,y) \coloneqq
			\frac{1}{2 \pi \sqrt{\det \left(
					\Sigma_{N}(t,x)
					\right)}}
			e^{
			- \frac{1}{2} y^T  
			\Sigma^{-1}_{N}(t,x)
			 y }
			$$ is the density function of $
			\frac{ \nabla P_{N} Z^{(\alpha)} (t,x)}{K_{N} }
			\sim \mathcal{N} \left(
			\mathbf{0} , \Sigma_{N}(t,x)
			\right)
			$.
			By the choice $\vartheta = \frac{1}{p+2}$, we obtain
            \begin{align*}
                \sup_{t\in[t_N,T]}
                \sup_{x\in\T^2}
                \mathbb E
                \left|
                f\left(\nabla u^{(N)}(t,x)\right)
                \right|^p
				&
				\lesssim
				N^{-2 \alpha p \vartheta}
				+
				N^{2\alpha \left(2 \vartheta  -1\right)}  
				\lesssim
				{N}^{- \frac{2\alpha p}{p+2}}.
			\end{align*} 
            In particular, for each $t \in [t_N , T]$ with $\floor{t}_h \geq t_N$ the same statement applies to  the nonlinearity of $u_h^{(N)}$.
		This shows the assertion.
		\end{proof}
	
		\begin{remark}
		For a more general nonlinearity $f$ satisfying
		\begin{align*}
			| f \left(\nabla u \right) | \asymp | \nabla u|^{-\eta} 
		\end{align*}
		for $\eta >0$, we even get the bound
		\begin{align*}
			\sup_{x \in \T^2 }   
			\left( \mE  \left |
			f \left(
			\nabla u^{(N)}(t,x)
			\right)
			\right |^p
			\right)^{\frac{1}{p}}
			\lesssim
			{N}^{- \frac{2\alpha \eta }{p \eta +2}}
		\end{align*}
	for each $t \in (0,T]$, by choosing $\vartheta \coloneqq \frac{1}{p \eta +2}$ and applying the inequality
	$$
		\sup_{x \in \T^2 }   
	\mE \left[ \left |
	f \left(
	\nabla u^{{N}} (t,x)
	\right)
	\right |^p   \right |
	\ \left . 
	\left | 
	\nabla P_{N} Z^{(\alpha)} (t,x)
	\right | > K_{N} 
	\right]  
	<
	 \left | K_N - M \right|^{- p \eta}
	$$
	as in the proof of \Cref{Conva:Ratealpha}.
	\end{remark}

		\begin{lemma} \label[lemma]{cor:vanolin}
			For each $p>1$ there exists a constant $C = C\left(\alpha , p , \sigma, L, T, \left \| u_0 \right \|_{W^{1 , \infty}} \right) >0$, such that
			\begin{align*}
				\left( \mE  \left \| 
				f \left(\nabla u^{(N)}  \right) 
				\right \|^p_{L^p \left( [0,T]\times \T^2 , \R^2\right) } \right)^{\frac{1}{p}}
				\leq C
				{N}^{- \frac{2 \alpha}{p+2}}.
			\end{align*}

             In particular,  the same convergence rate holds for the nonlinearity of $u_h^{(N)}$.
		 \end{lemma}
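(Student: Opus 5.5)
We reduce the space-time $L^p$ norm to the pointwise moment bound of \Cref{Conva:Ratealpha} via Fubini, and split off a short initial layer on which only the trivial bound on $f$ is available. By Fubini,
\begin{align*}
\mE \left\| f\left(\nabla u^{(N)}\right)\right\|^p_{L^p([0,T]\times\T^2,\R^2)}
= \int_0^T \int_{\T^2} \mE \left| f\left(\nabla u^{(N)}(t,x)\right)\right|^p \dd x \dd t .
\end{align*}
We split the time integral at $t_N = N^{-\frac{2\alpha p}{p+2}}$ from \Cref{Conva:Ratealpha}. For $N< N_0$ the claim holds trivially by enlarging $C$, since $|f(z)|\leq \frac12$ for all $z\in\R^2$. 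So we may assume $N\geq N_0$ and, for large $N$, also $t_N\leq T$.

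On $[0,t_N]$ we use the uniform bound $|f|\leq \frac12$. This gives a contribution of at most $2^{-p}\,|\T^2|\, t_N \lesssim N^{-\frac{2\alpha p}{p+2}}$. On $[t_N,T]$ we apply \Cref{Conva:Ratealpha} directly: the integrand is bounded by $C N^{-\frac{2\alpha p}{p+2}}$ uniformly in $t$ and $x$, so this part is at most $C\,T\,|\T^2|\, N^{-\frac{2\alpha p}{p+2}}$. Adding the two parts and taking the $p$-th root gives $C N^{-\frac{2\alpha}{p+2}}$.

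The choice of $t_N$ in \Cref{Conva:Ratealpha} is exactly what makes this work. The length of the initial layer, where the noise has not yet become large enough to suppress $f$, decays at the same rate as the bound after $t_N$. There is therefore no real obstacle beyond checking that the constant is uniform in $t$ and $x$, which \Cref{Conva:Ratealpha} already provides.

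For the statement about $u_h^{(N)}$, we repeat the argument with $\nabla u_h^{(N)}(t,x)$ in place of $\nabla u^{(N)}(t,x)$. The pointwise bound in \Cref{Conva:Ratealpha} used only three ingredients: a deterministic bound $|\nabla u - \sigma\nabla P_N Z^{(\alpha)}|\leq M$, the Gaussian law of $\nabla P_N Z^{(\alpha)}(t,x)$, and the lower bound on its covariance determinant for $t\geq t_N$.
\begin{itemize}
\item The Gaussian structure and the determinant bound are identical for $u_h^{(N)}$, since its noise term is again $\sigma P_N Z^{(\alpha)}(t)$.
\item The remaining point is a uniform bound on $\nabla\big(u_h^{(N)} - \sigma P_N Z^{(\alpha)} - e^{tA}P_N u_0\big)$, together with a bound on $\nabla e^{tA}P_N u_0$ in terms of $u_0\in W^{1,\infty}$.
\end{itemize}
The first bound follows from the mild formulation \eqref{eq:Euler-uN}: $f$ is bounded, and $\|\nabla e^{(t-s)A}P_N\nabla\cdot\|_{L^\infty\to L^\infty}\lesssim (t-s)^{-1/2}$ is integrable, uniformly in $N$. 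This step is the only delicate point. Uniformity in $N$ of the $L^\infty$-bound for $P_N$ may fail, since the square Dirichlet kernel is not $L^\infty$-bounded uniformly in $N$. If so, we would instead argue in $C^{\kappa}$ for small $\kappa$, or absorb a logarithmic loss into the $N^{2\alpha\vartheta}$ threshold $K_N$. After this, the conclusion follows exactly as before.
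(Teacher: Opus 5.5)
Your proof of the main estimate is the paper's own argument: Tonelli, a split at $t_N=N^{-\frac{2\alpha p}{p+2}}$, the trivial bound on $f$ over $[0,t_N]$, and \Cref{Conva:Ratealpha} on $[t_N,T]$. You take the $p$-th root of the summed contributions more carefully than the paper's displayed chain does.

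For the $u_h^{(N)}$ addendum, which the paper only asserts, the step you flag closes easily if you avoid $L^\infty$ operator norms of $P_N$. Those norms grow like $N^{1/2}$ here, since $P_N$ is a disc cutoff, so a "logarithmic loss" is not the right fallback. Instead use that:
\begin{itemize}
\item $P_N$ is a contraction on every $\mathcal{H}^s$;
\item $\mathcal{H}^{2+\kappa}(\T^2)\hookrightarrow C^1(\T^2)$;
\item $\|e^{\tau A}\|_{L(\mathcal{H}^{-1},\mathcal{H}^{2+\kappa})}\lesssim\tau^{-\frac{3+\kappa}{4}}$ is integrable for $\kappa<1$;
\item $\|f(\nabla u)\|_{L^2}\leq\frac{L}{2}$.
\end{itemize}
Together these give the nonlinear remainder a deterministic gradient bound that is uniform in $N$ and $h$. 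For the initial-data term, on $t\geq t_N$ (where $t_N N^4\to\infty$) the piece $\nabla e^{tA}Q_Nu_0$ is negligible in $L^\infty$. Hence $\nabla e^{tA}P_Nu_0$ inherits the $W^{1,\infty}$ bound there, which is the only range where \Cref{Conva:Ratealpha} needs it.
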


    Note that as in \cite{BR} this result is used to show that $u^{(N)}$ converges to $u$ using the mild formulation.

		\begin{proof}
			By Tonelli's theorem and the pointwise bound on $f$ from \Cref{Conva:Ratealpha} for $t \geq t_N$ as well as the uniform boundedness $\| f\|_{L^\infty\left( \R^2 , \R^2\right) } \leq 1$, we obtain the following bound
			\begin{align*}
				\left( \mE  \left \| 
				f \left(\nabla u^{(N)}  \right) 
				\right \|^p_{L^p \left( [0,T]\times \T^2 , \R^2\right) } \right)^{\frac{1}{p}}
				 & 
				=
				\left( \mE 
				\int_{[0,L]^2} \int_0^T 
				\left |
				f \left(\nabla u^{(N)}(t,x) \right) 
				\right |^p \dd t  \dd x  
				\right)^{\frac{1}{p}}
				\\ & 
				= \left[
				\int_{[0,L]^2} \int_0^T 
				\mE \left |
				f \left(
				\nabla u^{(N)}(t,x)
				\right)
				\right |^p 
				\dd t  \dd x   
				\right]^{\frac{1}{p}}
				\\ & 
				\leq C
                \left(
				\int_0^T
				\sup_{x \in \T^2 }   
				\mE  \left |
				f \left(
				\nabla u^{(N)}(t,x)
				\right)
				\right |^p 
				\dd t 
				\right)^{\frac{1}{p}}
                \\ & 
				\leq C
				\int_0^{t_N} 
				1
				\dd t  \
                +  C
				\int_{t_N}^T
				{N}^{- \frac{2 \alpha}{p+2}}
				\dd t 
				\\ &  
				\leq 
                C
                {t_N}^{\frac{1}{p}}
                +
                C T
				{N}^{- \frac{2 \alpha}{p+2}}
			\end{align*} 
            where we choose $t_N = {N}^{- \frac{2 \alpha p }{p+2}}$, as in \Cref{Conva:Ratealpha}, and thus this verifies the lemma.
		\end{proof}

	\begin{remark} \label{re:vanishnonlin}
		For the stochastic convolution $P_N Z \coloneqq P_N Z^{(0)}$, i.e. for roughness parameter~$\alpha = 0$, in
		\begin{align*} 
			\tilde{u}^{N}(t)
			\coloneqq 
			e^{tA} P_N u_0+\int_0^t e^{(t-s)A}  \mathfrak{F} \left( \tilde u^{N}(s) \right ) \dd s + P_{N} Z(t)
		\end{align*}
		we derived in \cite{BR} the following convergence rate
		\begin{align*}
			\sup_{x \in \T^2 }   
			\left( \mE \left |
			f \left(
			\nabla \tilde{u}^{N}(t,x)
			\right)
			\right |^p \right)^{\frac{1}{p}}
			\lesssim
			\ln \left(N \right)^{- \frac{1}{p+2}}.
		\end{align*}
		By choosing $K_{N} = \ln(N)^\vartheta \vee M$ the proof proceeds as in the proof of \Cref{Conva:Ratealpha}.
	\end{remark}

	\noindent
	\begin{remark}
		The application of a non-increasing taming rate $\eta : \N \to (0, \infty)$ to the orthogonal projection and a non-decreasing function $\zeta : \N \to \N$ such that we have the stochastic convolution
		\begin{align*}
			\eta (N) P_{\zeta(N)} Z (t,x)
			\coloneqq
			\eta(N) \sum_{k\in \ZZ, |k| \leq {\zeta(N)}}\int_0^te^{- (t-s)\mu_k^2}\dd \beta_k(s) e_k(x),
		\end{align*}
		may improve several of the convergence rates discussed in this article. However, a detailed analysis of these potential improvements will be provided in our forthcoming paper \cite{BLR}.
	\end{remark}
	
	\subsection{Noise with Converging Diffusion Coefficients} \label{converg:noise}
	Inspired by the results of \cite{BR:GR26}, where we showed that a sequence of
diffusion coefficients $(\sigma_N)_{N\in\N}$ must not converge too rapidly to
zero, since this would destroy the linearization effect caused by the vanishing
of the nonlinearity, and taking into account the growth rate of the regularized
stochastic convolution $(P_N Z^{(\alpha)})_{N\in\N}$ established in
\Cref{convrate:Zalpha}, we consider a sequence
\[
    (\sigma(N))_{N\in\N}\subset(0,1)
\]
depending only on the spatial truncation parameter $N$.
We require that $\sigma(N)\to0$ as $N\to\infty$, while at the same time
\begin{equation*}
    \E
        \left\|
            \sigma(N)P_N Z^{(\alpha)}(t)
        \right\|_{\mathcal H^1}^2
    \xrightarrow{N\to\infty}\infty.
\end{equation*}
A sufficient condition is that, for some fixed but arbitrary
$\zeta\in(0,\frac{1}{2})$ and all sufficiently large $N\in\N$, we have
\begin{equation*}
    \sigma(N)\geq
    \begin{cases}
        \bigl(\log N\bigr)^{-\zeta},
        & \alpha=0,\\[0.3em]
        N^{- 2 \zeta \alpha },
        & \alpha\in(0,1).
    \end{cases}
\end{equation*}
	Let 
	\begin{align*}
		Z_{\sigma(N), N}^{(\alpha)}  (t,x) 
		\coloneqq 
		\sigma(N) P_{N}  Z^{(\alpha)} (t,x) 
		=
		\sigma(N) \sum_{k\in\ZZ, |k| \leq N} \mu_k^{\frac{\alpha}{2}}\int_0^te^{- (t-s)\mu_k^2}\dd \beta_k(s) e_k(x)
	\end{align*}
	for a null sequence $\sigma : \N \to (0,1)$.
    Correspondingly, we
	denote 
	\begin{align*}
		\tilde{u}_{\alpha, \sigma(N)}^{(N)}(t)
		=
		e^{tA} P_N u_0+\int_0^t e^{(t-s)A} P_N \mathfrak{F} \left( \tilde{u}_{\alpha, \sigma(N)}^{(N)}(s) \right) \dd s +Z_{\sigma(N), N}^{(\alpha)}(t).
	\end{align*}

    Following the same proof as for \Cref{Conva:Ratealpha}, we obtain the result below.
	\begin{lemma}
    Let $N \geq 3$ sufficiently large.
    Let $u_0 \in \mathcal{H}^{1 + \eta} \left(\T^2 \right) \cap W^{1 , \infty} \left(\T^2 \right)$ for $\eta  \in (0, 4)$. 
		For each $p\geq1$ and $t \in (0,T]$ it holds that for $f(z)\coloneqq \frac{z}{1+|z|^2}, z\in\mR^2,$ 
		\begin{align*}
			\sup_{x \in \T^2 }   
			\left( \mE  \left |
			f \left(
            \nabla
			\tilde{u}_{\alpha, \sigma(N)}^{(N)}(t,x)
			\right)
			\right |^p
			\right)^{\frac{1}{p}}
			\lesssim 
			\begin{cases}
				\left[
				\sigma(N)^2
				\ln \left(N \right)
				\right]^{- \frac{1}{p+2}}, & 
				\quad \text{for $\alpha =0$,}
				\\
				\left[
				\sigma(N)^2
				N^{2 \alpha}
				\right]^{- \frac{1}{p+2}}, & 
				\quad \text{for $\alpha \in (0,1)$.}
			\end{cases}
		\end{align*}
	\end{lemma}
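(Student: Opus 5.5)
We follow the proof of \Cref{Conva:Ratealpha}, replacing the growth scale $N^{2\alpha}$ (resp.\ $\ln N$) of \Cref{convrate:Zalpha} by the effective variance $\Lambda_N \coloneqq \sigma(N)^2 N^{2\alpha}$ for $\alpha\in(0,1)$ and $\Lambda_N \coloneqq \sigma(N)^2\ln N$ for $\alpha=0$. By the assumed lower bound on $\sigma(N)$ we have $\Lambda_N\to\infty$. The argument has three steps.

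\textbf{Step 1: uniform bound on the remainder.} We first establish a deterministic bound on the gradient of
\[
v \coloneqq \tilde{u}_{\alpha, \sigma(N)}^{(N)} - Z_{\sigma(N), N}^{(\alpha)} - e^{tA}P_N u_0 .
\]
The bound $|\nabla v|\le \hat M$ from \cite[Theorem~3.9]{Rimmele2026} only uses $\|f\|_\infty\le 1$ and smoothing of $e^{tA}$ (together with boundedness of $P_N$ on the relevant spaces). It therefore holds with $\hat M$ independent of $N$ and of the noise amplitude. Since $u_0\in W^{1,\infty}$, we also get $\sup_t\|\nabla e^{tA}P_Nu_0\|_\infty\le \tilde M$. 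Some care is needed here, because $P_N$ is not uniformly bounded on $W^{1,\infty}$. We would either use $u_0\in\mathcal H^{1+\eta}$ together with smoothing for $t>0$, or accept a constant depending on $t$, which is allowed since $t\in(0,T]$ is fixed. Either way we obtain a constant $M$ with $|\nabla\tilde u - \nabla Z_{\sigma(N),N}^{(\alpha)}|\le M$ pointwise.

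\textbf{Step 2: splitting at a level $K_N$.} Set $\vartheta=\frac1{p+2}$ and $K_N\coloneqq\max\{\Lambda_N^{\vartheta},2M\}$. On the event $\{|\nabla Z_{\sigma(N),N}^{(\alpha)}|>K_N\}$ we have $|\nabla\tilde u|\ge K_N-M\ge K_N/2$. Hence $|f(\nabla\tilde u)|\le 2/K_N$, which contributes at most $(4/K_N)^p\lesssim \Lambda_N^{-p\vartheta}$ to $\E|f|^p$. On the complementary event we bound $|f|\le1$ and estimate
\[
\mathbb P\bigl(|\nabla Z_{\sigma(N),N}^{(\alpha)}(t,x)|\le K_N\bigr)\le \pi\max_y\varphi_N \lesssim \det(\Sigma_N)^{-1/2},
\]
where $\Sigma_N$ is the covariance of $\nabla Z_{\sigma(N),N}^{(\alpha)}(t,x)/K_N$. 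By \cite[Lemma~3.10]{Rimmele2026} it is independent of $x$.

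\textbf{Step 3: lower bound on the determinant (main obstacle).} The covariance simply scales by $\sigma(N)^2$, so $\Sigma_N=\sigma(N)^2K_N^{-2}\,\mathrm{Cov}(\nabla P_NZ^{(\alpha)})$. For fixed $t>0$, we have $1-e^{-2t\mu_k^2}\ge c_t>0$ for all $k\ne0$. For $\alpha>0$ we restrict to the block $\mathcal R_N$ as before. For $\alpha=0$ we use the full sum $\sum_{|k|\le N}k_1^2|k|^{-4}\asymp\ln N$ and symmetry in $k_1,k_2$. The off-diagonal entries vanish by symmetry $k_1\mapsto -k_1$, and the two diagonal entries are equal, each $\gtrsim \sigma(N)^2N^{2\alpha}$ (resp.\ $\sigma(N)^2\ln N$). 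Hence $\det\Sigma_N\gtrsim K_N^{-4}\Lambda_N^2=\Lambda_N^{2-4\vartheta}$, using $K_N=\Lambda_N^{\vartheta}$ for $N$ large.

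The main point to check is that the constant $\hat M$ does not secretly depend on the noise and that the determinant lower bound holds uniformly in $N$ for fixed $t$. Both follow as indicated.

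\textbf{Conclusion.} Combining the two contributions gives
\[
\sup_x\E|f(\nabla\tilde u)|^p\lesssim \Lambda_N^{-p\vartheta}+\Lambda_N^{2\vartheta-1}=2\Lambda_N^{-p/(p+2)} .
\]
Taking $p$-th roots yields $\Lambda_N^{-1/(p+2)}$, which is the asserted rate in both cases.
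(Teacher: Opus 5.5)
Your proposal is correct and takes essentially the paper's approach: the paper likewise reruns the proof of \Cref{Conva:Ratealpha} with $\vartheta=\frac{1}{p+2}$ and truncation level $K_N=\Lambda_N^{\vartheta}\vee M$, using only that $\sigma(N)$ is a deterministic factor, so the covariance of $\nabla Z_{\sigma(N),N}^{(\alpha)}$ is that of $\nabla P_NZ^{(\alpha)}$ scaled by $\sigma(N)^2$. Your additional care fills in details the paper leaves implicit: you bound $\nabla e^{tA}P_Nu_0$ via $u_0\in\mathcal H^{1+\eta}$ and smoothing for fixed $t>0$, you use the $t$-dependent lower bound $1-e^{-2t\mu_k^2}\geq c_t$, and you take $2M$ rather than $M$ in $K_N$ so that $K_N-M\geq K_N/2$.
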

	
	\begin{proof}
		By choosing $\vartheta \coloneqq \frac{1}{p+2} \in \left(0, \frac{1}{2} \right)$ and $
		K_{N, \sigma(N)}  \coloneqq \left[ \sigma(N)^2 \ln \left({N}\right) \right]^\vartheta \vee M
		$ if $\alpha=0$ holds, and by choosing
		$
		K_{N, \sigma(N)} \coloneqq \left[ \sigma(N)^2 N^{2\alpha} \right]^\vartheta \vee M
		$ if $\alpha \in (0,1)$ holds,
		the proof is analogous to the proof of \Cref{Conva:Ratealpha} as 
		\begin{align*}
			Z_{\sigma(N), N}^{(\alpha)}  \equiv \sigma(N) P_N  Z_{ }^{(\alpha)} 
		\end{align*}
		holds and $\sigma(N)$ is a constant factor that does not depend on $x \in \T^2$, $t \in (0,T]$ or~$\omega  \in \Omega$.
	\end{proof}
	 
	\noindent
	Now, it is straightforward to obtain the following result:
	\begin{corollary}
		For fixed but arbitrary $\zeta \in \big( 0 , \frac{1}{2} \big)$ according to the roughness of the noise, consider for every $N \in \N$
		\begin{align*}
			\sigma(N) \coloneqq
			\begin{cases}
				\ln (N)^{-\zeta} & \quad \text{ for $\alpha =0$,} \\ 
				N^{- 2 \zeta \alpha  } & \quad \text{ for $\alpha \in (0,1)$}.
			\end{cases}
		\end{align*}
        Then 
        for each $p\geq1$ and each  $t \in (0,T]$ we have the following, for $f(z)\coloneqq \frac{z}{1+|z|^2}, z\in\mR^2,$
		\begin{align*} 
		 \sup_{x \in \T^2 }   
			\left( \mE  \left |
			f \left(
			\nabla \tilde{u}_{\alpha, \sigma(N)}^{(N)}(t,x)
			\right)
			\right |^p \right)^\frac{1}{p}
			\lesssim 
			\begin{cases}
				\ln \left( N \right)^{\frac{2 \zeta - 1}{(p+2)}} \quad & \text{for $\alpha =0$}, \\
				N^{\frac{2 \alpha  \left(2 \zeta - 1 \right)}{(p+2)}} \quad & \text{for $\alpha \in (0,1)$.}
			\end{cases}
		\end{align*} 
	\end{corollary}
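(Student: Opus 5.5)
The corollary follows by substituting the chosen $\sigma(N)$ into the preceding lemma for $\tilde{u}_{\alpha, \sigma(N)}^{(N)}$. That lemma gives, for each $p\geq1$ and $t\in(0,T]$, the bound $\bigl[\sigma(N)^2 \ln(N)\bigr]^{-\frac{1}{p+2}}$ for $\alpha=0$ and $\bigl[\sigma(N)^2 N^{2\alpha}\bigr]^{-\frac{1}{p+2}}$ for $\alpha\in(0,1)$. So the only work is to compute these quantities for the explicit $\sigma(N)$, and to check that the hypotheses of the lemma are met.

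For $\alpha=0$ we have $\sigma(N)^2\ln(N) = \ln(N)^{1-2\zeta}$. Raising this to the power $-\frac{1}{p+2}$ gives $\ln(N)^{\frac{2\zeta-1}{p+2}}$. For $\alpha\in(0,1)$ we have $\sigma(N)^2 N^{2\alpha} = N^{2\alpha-4\zeta\alpha} = N^{2\alpha(1-2\zeta)}$. Raising this to the power $-\frac{1}{p+2}$ gives $N^{\frac{2\alpha(2\zeta-1)}{p+2}}$. Both match the asserted bounds.

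It remains to check that the lemma applies, and this is the only point needing care. In its proof the truncation level is $K_{N,\sigma(N)} = [\sigma(N)^2 N^{2\alpha}]^{\vartheta}\vee M$ (respectively the logarithmic version). The argument requires $K_{N,\sigma(N)}\to\infty$, so that $K_{N,\sigma(N)}>2M$ and the estimate $\frac{2}{1+K-M}\lesssim K^{-1}$ is valid for large $N$. It also requires the determinant lower bound for the covariance of $\sigma(N)\nabla P_N Z^{(\alpha)}/K_{N,\sigma(N)}$ to scale like $[\sigma(N)^2N^{2\alpha}]^{2-4\vartheta}$. Since $\zeta<\frac12$, we have $1-2\zeta>0$, so $\sigma(N)^2N^{2\alpha}\to\infty$ and $\sigma(N)^2\ln N\to\infty$, and the first requirement holds. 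The second holds because the factor $\sigma(N)^2$ enters the covariance matrix linearly and independently of $x$, $t$ and $\omega$, exactly as in the proof of \Cref{Conva:Ratealpha}.

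One further point: the uniform bound $M$ on $\nabla v^{(N)}$ from \cite[Theorem~3.9]{Rimmele2026} must stay uniform in $N$ when the noise is scaled by $\sigma(N)\in(0,1)$. This holds because that bound depends only on $\|f\|_\infty\le 1$ and on the smoothing of the semigroup, not on the noise. With these checks done, choosing $N$ sufficiently large and absorbing constants into $\lesssim$ proves the corollary for every $p\geq1$ and $t\in(0,T]$.
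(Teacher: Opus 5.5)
Your proposal is correct and takes essentially the paper's approach: the paper simply calls the corollary "straightforward" from the preceding lemma, and you make that precise by substituting $\sigma(N)^2\ln N=\ln(N)^{1-2\zeta}$ and $\sigma(N)^2N^{2\alpha}=N^{2\alpha(1-2\zeta)}$, noting that $\zeta<\tfrac12$ makes both diverge. Your extra checks on the truncation level $K_{N,\sigma(N)}$, the $\sigma(N)^4$ scaling of the covariance determinant, and the noise-independence of $M$ agree with how the paper justifies that lemma, but they are not needed once the lemma is taken as given.
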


		\section{Decomposition of the Error Function}  \label{sec:proof}
        In this section, we will only consider the case $\alpha \in (0,1)$.
		Let
		$
		e^{(N)}_h  =  u^{(N)}-u_h^{(N)}
		$
		be the error function and $Q_N \coloneqq I-P_N$ be an orthogonal projection. 
		Then we have the mild formulation
		\begin{align}
			e^{(N)}_h (t)
			&= e^{tA}Q_N u_0 + \int_0^t e^{(t-s)A}  \left[ \mathfrak{F} (u^{(N)}(s)) - P_N \mathfrak{F} (u_h^{(N)}(\floor{s}_h)) \right]\dd s  \label{e:mild-eh}\\
			&= e^{tA}Q_N u_0 
			+ \int_0^t e^{(t-s)A}  Q_N  \mathfrak{F} (u^{(N)}(s))\dd s 
            \nonumber\\& \qquad
			+\int_0^t e^{(t-s)A}  \left[P_N  \mathfrak{F} (u^{(N)}(s)) - P_N  \mathfrak{F} (u_h^{(N)}(s)) \right]\dd s \nonumber\\& \qquad
			+\int_0^t e^{(t-s)A}  \left[ P_N \mathfrak{F} (u_h^{(N)}(s)) -  P_N \mathfrak{F} (u_h^{(N)}(\floor{s}_h)) \right]\dd s  \nonumber\\
			& \eqqcolon I_1 (t) +I_2(t) + I_3(t) + I_4(t).\nonumber
		\end{align}
        We now bound $I_1$, $I_3$, and $I_4$, while $I_2$ is treated via a Gr\"onwall argument in the next section.

			\begin{lemma}   \label[lemma]{ineq:I1}
				Let $u_0 \in \mathcal{H}^{1 + \eta} \left(\T^2 \right) \cap W^{1 , \infty} \left(\T^2 \right)$ for $\eta  \in (0, 4)$. Then, for each $p \geq 1$, we have
				\begin{align*}
					\left( \mathbb{E}  \sup_{t \in [0,T]} \left \|I_1(t) \right \|_{\mathcal{H}^1} ^p \right)^{\frac{1}{p}} 
					\leq 
					\| u_0\|_{\mathcal{H}^{1+\eta }}
					\left(
					\frac{L}{2 \pi}
					\right)^{\eta } N^{-\eta }.
				\end{align*}
			\end{lemma}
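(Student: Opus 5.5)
The term $I_1(t)=e^{tA}Q_N u_0$ is deterministic, so the expectation and the $p$-th root are irrelevant. The plan is to show
\[
\sup_{t\in[0,T]}\|e^{tA}Q_Nu_0\|_{\mathcal H^1}\le \|u_0\|_{\mathcal H^{1+\eta}}\Big(\tfrac{L}{2\pi}\Big)^{\eta}N^{-\eta}.
\]
This bound reduces the $\mathcal H^1$ norm to the $\mathcal H^{1+\eta}$ norm by trading Fourier weights on the high modes $|k|>N$.

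Write $u_0=\sum_k u_k e_k$. Since $e_k$ are eigenfunctions of $A=-\Delta^2$ with eigenvalue $-\mu_k^2\le 0$, we have
\[
e^{tA}Q_Nu_0=\sum_{|k|>N}e^{-t\mu_k^2}u_ke_k .
\]
Using Definition \ref{def:Halpha}, with the norm $\|v\|_{\mathcal H^s}^2=\sum_k\mu_k^{s}|v_k|^2$ (up to the paper's normalisation), and the bound $e^{-t\mu_k^2}\le1$, we obtain
\[
\|e^{tA}Q_Nu_0\|_{\mathcal H^1}^2\le\sum_{|k|>N}\mu_k|u_k|^2=\sum_{|k|>N}\mu_k^{-\eta}\,\mu_k^{1+\eta}|u_k|^2 .
\]
This estimate is uniform in $t$, so taking $\sup_{t\in[0,T]}$ costs nothing.

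On the torus of side length $L$ we have $\mu_k=(2\pi/L)^2|k|^2$ (see \eqref{Laplace:EV}). Hence, for $|k|>N$,
\[
\mu_k^{-\eta}\le (L/2\pi)^{2\eta}N^{-2\eta}.
\]
Pulling this factor out of the sum and taking square roots gives the claimed bound $(L/2\pi)^{\eta}N^{-\eta}\|u_0\|_{\mathcal H^{1+\eta}}$.

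The only point needing care is matching the exact normalisation of the $\mathcal H^s$ norm and of $\mu_k$ in Definition \ref{def:Halpha} and \eqref{Laplace:EV}, so that the constant comes out exactly as $(L/2\pi)^\eta$ with no extra factor. The assumption $u_0\in W^{1,\infty}$ is not needed here, and the restriction $\eta<4$ plays no role in this step.
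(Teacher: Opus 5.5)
Your proposal is correct and follows essentially the same argument as the paper: $I_1(t)=e^{tA}Q_Nu_0$ is deterministic, the semigroup factor is bounded by $1$ uniformly in $t$ (the paper writes $e^{-t(2\pi/L)^4}$), and $\mu_k^{-\eta}\leq (L/2\pi)^{2\eta}N^{-2\eta}$ is pulled out of the sum over $|k|>N$ before taking square roots. Your side remarks that $u_0\in W^{1,\infty}$ and $\eta<4$ play no role in this step are also accurate.
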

			 
			\begin{proof}
				For $u_0 = \sum_{k \in \ZZ} u_k e_k $  we have
				\begin{align*}
					\left( \mathbb{E}  \sup_{t \in [0,T]} \left \|I_1(t) \right \|_{\mathcal{H}^1} ^p \right)^{\frac{1}{p}}
					& = 
					\sup_{t \in [0,T]}
					\|I_1 (t) \|_{\mathcal{H}^1} 
					\\ & \leq 
					\sup_{t \in [0,T]}
					e^{-t \left( \frac{2\pi}{L}\right)^4} \|Q_N u_0\|_{\mathcal{H}^1}
					\\ &
					\leq 
					\left(
					\sum_{k \in \ZZ  , |k|>N} u_k^2    \mu_k^{1+ \eta }    \mu_k^{- \eta }
					\right)^{\frac{1}{2}}
					\\ &
					\leq 
					\| u_0\|_{\mathcal{H}^{1+\eta }}
					\left(
					\frac{L}{2 \pi}
					\right)^{\eta } N^{-\eta },
				\end{align*}
				where we apply $\mu_k^{-\eta} = \left( \frac{L}{2 \pi |k| } \right)^{2\eta } \leq \left( \frac{L}{2 \pi N } \right)^{2\eta }$ for $|k| > N$. 
			\end{proof}
			
			\begin{lemma}  \label[lemma]{ineq:I2}
				For each $\gamma \in (0,2)$ and $p > \frac{4}{\gamma}$  such that $ \frac{p}{p-1} \in 
				\left( 1 \ , \ \frac{1}{1-\frac{\gamma}{4}} \right)$,  we obtain 
				\begin{align*}
					\left( \mathbb{E} \sup_{t \in [0,T]} \left \|  
					I_2(t) \right \|_{\mathcal{H}^1} ^p \right)^{\frac{1}{p}} 
					\leq C 
					N^{-2 + \gamma - \frac{2 \alpha}{p + 2}}.
				\end{align*}
			\end{lemma}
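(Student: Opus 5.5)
We bound $I_2(t)=\int_0^t e^{(t-s)A}Q_N\mathfrak F(u^{(N)}(s))\dd s$ by combining three ingredients: the smoothing of the analytic semigroup $e^{tA}$ with $A=-\Delta^2$, the high-frequency gain from $Q_N$, and the $L^p$ decay of $f(\nabla u^{(N)})$ from \Cref{cor:vanolin}.

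First, $\mathfrak F(u)=-\nabla\cdot f(\nabla u)$ loses one derivative. Hence for each $s<t$ we have
\begin{align*}
\big\|e^{(t-s)A}Q_N\mathfrak F(u^{(N)}(s))\big\|_{\mathcal H^1}\leq C\big\|(-\Delta)\,e^{(t-s)A}Q_N\big\|_{L^2\to L^2}\,\big\|f(\nabla u^{(N)}(s))\big\|_{L^2}.
\end{align*}
On Fourier modes with $|k|>N$ the multiplier is $\mu_k e^{-(t-s)\mu_k^2}$. Write it as $\mu_k^{2-\gamma}\cdot\mu_k^{\gamma-1}e^{-(t-s)\mu_k^2}$. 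Since $\gamma<2$ gives $2-\gamma>0$, the factor $\mu_k^{-(2-\gamma)/2}\cdots$ can be traded for $N^{-(2-\gamma)}$. Concretely, we use $\mu_k^{-1+\gamma/2}\lesssim N^{-2+\gamma}$ for $|k|>N$, together with $\sup_\mu \mu^{2-\gamma/2}\,\mu^{-1+\gamma/2}\cdots$. The cleanest form of the bound is
\begin{align*}
\sup_{|k|>N}\mu_k e^{-\tau\mu_k^2}\leq \sup_{|k|>N}\mu_k^{-1+\frac{\gamma}{2}}\,\sup_{\mu>0}\mu^{2-\frac{\gamma}{2}}e^{-\tau\mu^2}\leq C N^{-2+\gamma}\,\tau^{-1+\frac{\gamma}{4}}.
\end{align*}
This produces an integrable singularity, since $1-\gamma/4<1$.

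Second, we take the supremum in $t$ and apply Hölder's inequality in time with exponents $p$ and $p'=\frac{p}{p-1}$:
\begin{align*}
\sup_{t}\|I_2(t)\|_{\mathcal H^1}\leq CN^{-2+\gamma}\Big(\int_0^T \tau^{-(1-\frac{\gamma}{4})p'}\dd\tau\Big)^{1/p'}\Big(\int_0^T\|f(\nabla u^{(N)}(s))\|_{L^2}^p\dd s\Big)^{1/p}.
\end{align*}
The first integral is finite precisely because $p'<(1-\gamma/4)^{-1}$, which is the hypothesis of the lemma. For the second factor, $\|f\|_{L^2(\T^2)}\leq C\|f\|_{L^p(\T^2)}$ on the bounded torus since $p>2$ (indeed $p>4/\gamma>2$). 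Therefore the last factor is bounded by $C\|f(\nabla u^{(N)})\|_{L^p([0,T]\times\T^2)}$.

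Finally, we take $L^p(\Omega)$ norms and invoke \Cref{cor:vanolin}, which gives the rate $N^{-2\alpha/(p+2)}$ and hence the claimed bound $CN^{-2+\gamma-\frac{2\alpha}{p+2}}$.

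The main obstacle is the first step. It requires correctly balancing the exponent so that the $Q_N$ gain $N^{-2+\gamma}$ is obtained while the time singularity $\tau^{-1+\gamma/4}$ stays $L^{p'}$-integrable; this balance is exactly what forces the coupling between $\gamma$ and $p$. A secondary point is the need for $\sup_t$ inside the expectation. This is harmless here because the Hölder bound is pathwise and uniform in $t$, which is why we use the space-time $L^p$ norm from \Cref{cor:vanolin} rather than the pointwise estimate of \Cref{Conva:Ratealpha}.
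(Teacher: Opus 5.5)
Your proposal is correct and follows essentially the paper's route. The multiplier bound $\sup_{|k|>N}\mu_k e^{-\tau\mu_k^2}\le CN^{-2+\gamma}\tau^{-1+\frac{\gamma}{4}}$ is exactly the paper's factorization $\|Q_N\|_{L(\mathcal{H}^{-1},\mathcal{H}^{-3+\gamma})}\,\|e^{\tau A}\|_{L(\mathcal{H}^{-3+\gamma},\mathcal{H}^{1})}$, and you then apply H\"older in time with $p'=\frac{p}{p-1}$ and conclude with \Cref{cor:vanolin}, as the paper does. Your version is slightly tidier, since you keep the expectation outside the time integral and make explicit the step $\|\cdot\|_{L^2(\T^2)}\le C\|\cdot\|_{L^p(\T^2)}$ (using $p>2$) needed to reach the $L^p([0,T]\times\T^2)$ norm of \Cref{cor:vanolin}, which the paper glosses over; your intermediate sentences about factoring $\mu_k$ are muddled, but the displayed bound you actually use is right.
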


			\begin{proof}
				Applying Hölder's inequality, Minkowski inequality, and \Cref{cor:vanolin}, we conclude
				\begin{align*}
					& \left( \mathbb{E}  \sup_{t \in [0,T]} \left \|I_2(t) \right \|_{\mathcal{H}^1} ^p \right)^{\frac{1}{p}}
					\\ & =
					\left( \mathbb{E}  \sup_{t \in [0,T]}
					\left \|  \int_0^t e^{(t-s)A}  Q_N  \left[  \mathfrak{F} \left(u^{(N)}(s) \right) \right] \dd s \right \|_{\mathcal{H}^1}
					^p \right)^{\frac{1}{p}}
					\\ &
					\leq 
                    \left \| Q_N \right \|_{L \left( \mathcal{H}^{-1} ,\mathcal{H}^{-3+\gamma} \right)} 
                    \left( \mathbb{E}    \sup_{t \in [0,T]} \left( \int_0^t \left \| e^{(t-s)A} \nabla \cdot \right \|_{L \left( \mathcal{H}^{-2+\gamma} \left(\T^2 , \R^2 \right) , \mathcal{H}^{1} \right)} 
					\left \| f \left( \nabla u^{(N)}(s) \right) \right \|_{L^2} \dd s \right)^p \right)^{\frac{1}{p}}  
					\\
					& \leq 
                    \left \| Q_N \right \|_{L \left( \mathcal{H}^{-1} ,\mathcal{H}^{-3+\gamma} \right)} 
                    \left( \mathbb{E}    \sup_{t \in [0,T]} \left( \int_0^t \left \| e^{(t-s)A} \right \|_{L \left( \mathcal{H}^{-3+\gamma} , \mathcal{H}^{1} \right)} 
					\left \| f \left( \nabla u^{(N)}(s) \right) \right \|_{L^2} \dd s \right)^p \right)^{\frac{1}{p}}  
					\\
						& \leq 
						C_{\gamma}
						\| Q_N \|_{L \left( \mathcal{H}^{-1} ,  \mathcal{H}^{-3 + \gamma} \right)}   
					\left(
					\int_0^T 
					 s^{\left(-1+\frac{ \gamma}{4}\right)\frac{p}{p-1}}  \dd s  \right)^{\frac{p-1}{p}}
					\left( \mathbb{E}
					\int_0^T
					\left \| f \left( \nabla u^{(N)}(s) \right) \right \|_{L^2}^{p} \dd s \right)^{\frac{1}{p}} 
					\\
						& \leq
						C_{\gamma}
					N^{-2 + \gamma}   T^{(-1 + \frac{\gamma}{4}) +\frac{p-1}{p}}
					\int_0^T
					\left( \mathbb{E} 
					\left \| f \left( \nabla u^{(N)}(s) \right) \right \|_{L^2}^{ p}  \right)^{\frac{1}{p}} 
					\dd s 
					\\
					& \leq 
					C_T
					N^{-2 + \gamma}    N^{- \frac{2 \alpha}{p + 2}}.
				\end{align*}
			This shows the assertion.
				\end{proof}

            \begin{lemma}  \label[lemma]{ineq:I3}
				For $p >1$  and each $r \in [0,T]$ we have
				\begin{align*}
					 \left(\mathbb{E} 
					\sup_{t \in [0,r]}
					\left \|
					I_3(t)
					\right \|_{\mathcal{H}^1}^p 
					\right)^{\frac{1}{p}}    
					& \leq 
					\frac{1}{\sqrt{2e}} \int_0^r (r-s)^{-\frac{1}{2}}\left( \mathbb{E}   \sup_{\tau \in [0,s]}
				\|e^{(N)}_h (\tau ) \|_{\mathcal{H}^1}^p
				\right)^{\frac{1}{p}}  \dd s .
				\end{align*}
			\end{lemma}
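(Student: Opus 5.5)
The plan is to bound $I_3$ pathwise by combining a global Lipschitz estimate for $f$ with the smoothing of the semigroup, and only then take the $L^p(\Omega)$ norm.

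\textbf{Pointwise estimate.} Write $\mathfrak F(u)=-\nabla\cdot f(\nabla u)$. Since $f(z)=z/(1+|z|^2)$ has bounded derivative, and in fact $|Df(z)|\le 1$ in operator norm, we have the pointwise Lipschitz bound
\[
|f(\nabla u^{(N)})-f(\nabla u_h^{(N)})|\le |\nabla e_h^{(N)}|,
\]
where $\nabla u^{(N)}-\nabla u_h^{(N)}=\nabla e_h^{(N)}(s)$. Note that $u_h^{(N)}$ is evaluated at $s$ in $I_3$, not at $\floor{s}_h$.

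\textbf{Semigroup smoothing.} In Fourier modes, $e^{(t-s)A}P_N\nabla\cdot$ maps $L^2(\T^2,\R^2)$ to $\mathcal H^1$. The $k$-th mode of $\nabla\cdot g$ has size at most $\mu_k^{1/2}|\hat g_k|$, and the $\mathcal H^1$ norm adds another factor $\mu_k^{1/2}$. The relevant multiplier is therefore $\mu_k e^{-(t-s)\mu_k^2}$. Using $\sup_{x\ge0} x e^{-\tau x^2}=(2e\tau)^{-1/2}$ and $\|P_N\|\le1$, this gives
\[
\|e^{(t-s)A}P_N\nabla\cdot g\|_{\mathcal H^1}\le (2e)^{-1/2}(t-s)^{-1/2}\|g\|_{L^2}.
\]
This is exactly where the constant $1/\sqrt{2e}$ in the statement comes from.

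\textbf{Combining.} The two steps give
\[
\|I_3(t)\|_{\mathcal H^1}\le (2e)^{-1/2}\int_0^t (t-s)^{-1/2}\|e_h^{(N)}(s)\|_{\mathcal H^1}\dd s,
\]
using $\|\nabla e\|_{L^2}\le\|e\|_{\mathcal H^1}$ with the paper's normalization of $\mathcal H^1$ as the homogeneous norm on mean-zero functions. Next we take $\sup_{t\in[0,r]}$. Writing $\tau=t-s$, the integral equals $\int_0^t \tau^{-1/2}\|e(t-\tau)\|\dd\tau$, and we bound $\|e(t-\tau)\|\le \sup_{\rho\le r-\tau}\|e(\rho)\|$ because $t\le r$. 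After extending the integral to $[0,r]$ and substituting back, we get
\[
\sup_{t\le r}\|I_3(t)\|\le (2e)^{-1/2}\int_0^r (r-s)^{-1/2}\sup_{\rho\le s}\|e(\rho)\|\dd s.
\]
Finally we apply Minkowski's integral inequality in $L^p(\Omega)$ to move the $L^p$ norm inside the $\dd s$ integral, which yields the claim.

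\textbf{Main obstacle.} The only delicate step is the monotone rearrangement that converts $\sup_t\int_0^t$ into $\int_0^r(r-s)^{-1/2}\sup_{[0,s]}$. If that change of variables is handled carefully, the rest is routine.
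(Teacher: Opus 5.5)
Your proof is correct and follows the same route as the paper: a Lipschitz bound for $\mathfrak{F}\colon\mathcal{H}^1\to\mathcal{H}^{-1}$, the smoothing estimate $\|e^{(t-s)A}\|_{L(\mathcal{H}^{-1},\mathcal{H}^1)}\le (2e(t-s))^{-1/2}$, and Minkowski's inequality in $L^p(\Omega)$. Two things you add that the paper does not spell out: you derive the Lipschitz constant $1$ directly from $|Df|\le 1$, where the paper cites it from earlier work; and your pathwise substitution $\tau=t-s$ justifies the passage from $\sup_{t\le r}\int_0^t$ to $\int_0^r (r-s)^{-1/2}\sup_{[0,s]}$, which the paper's display performs without comment.
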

			
			\begin{proof}
            First of all, in \cite[Section~4]{BR} we showed that $\mathfrak{F} : \mathcal{H}^1 \to \mathcal{H}^{-1}$ is Lipschitz continuous.
            Thus, by the Minkowski inequality we derive
				\begin{align*}
					  \left( \mathbb{E} 
					\sup_{t \in [0,r]}
					\left \|
					I_3(t)
					\right \|_{\mathcal{H}^1}^p 
					\right )^{\frac{1}{p}}  
					& =
                    \left( \mathbb{E}
					\sup_{t \in [0,r]}
					\left \|
					\int_0^t e^{(t-s)A}  P_N  \left[ \mathfrak{F} (u^{(N)}(s)) -   \mathfrak{F} (u_h^{(N)}(s)) \right]\dd s
					\right \|_{\mathcal{H}^1}^p 
					\right )^{\frac{1}{p}}   
                    \\
                    & \leq 
					\int_0^r
                    \left \| e^{(r-s)A}  \right \|_{L \left(\mathcal{H}^{-1} , \mathcal{H}^1 \right)}
                    \left(\E  \sup_{\tau \in [0,s]} \left \|\mathfrak{F} (u^{(N)}(\tau)) -   \mathfrak{F} (u_h^{(N)}(\tau))
					\right\|^p_{\mathcal{H}^{-1}} \right)^{\frac{1}{p}} \dd s
                    \\
                    & \leq 
                    \frac{1}{\sqrt{2e}}
					\int_0^r (r-s)^{-\frac{1}{2}}   
                    \left(\E   \sup_{\tau \in [0,s]} \left \| u^{(N)}(\tau) - u_h^{(N)}( \tau )
					\right\|^p_{\mathcal{H}^1} \right)^{\frac{1}{p}} 
					\dd s 
					.
				\end{align*}
			This confirms the claim.
			\end{proof}
			
			\begin{lemma}  \label[lemma]{ineqaux:I4}
			Let  $\alpha \in (0,1)$ and $u_0 \in \mathcal{H}^{1 + \eta} \left(\T^2 \right) \cap W^{1 , \infty} \left(\T^2 \right)$ for $\eta  \in (0, 4)$   and $\gamma \in (0,2)$. 
			For $p >1$, such that~$\frac{p}{p-1} \in \big(1 , \frac{1}{1-\frac{\gamma}{4}} \big)$, $\varepsilon \in \big( 0 , \frac{1}{2} - \frac{1}{p} \big)$, and  there exists a constant $C = C\left( \alpha , p , \sigma, L, T, \left \| u_0 \right \|_{W^{1 , \infty}} \right) >0$ we obtain 
				\begin{align*} 
					& \left( \mathbb{E}
					\sup_{t \in [0,T]} \left \| u_h^{(N)}(t) - u_h^{(N)}(\floor{t}_h)
					\right\|^p_{\mathcal{H}^1} \right)^{\frac{1}{p}} 
					\\
					& \leq C \left(
					h^{\frac{\eta }{4}}
					\left \|
					u_0
					\right \|_{\mathcal{H}^{1+ \eta }}
					+
					N^{2  + \alpha }
					h^{\frac{1}{2} - \frac{1}{p}  - \varepsilon}
					+
					h^{\frac{1}{2}- \frac{\gamma}{4}} \left[ N^{- \frac{2 \alpha}{p +2}} + h \right] 
					+
					{h}^{\frac{1}{2} - \frac{1}{p}}
					\left[                       N^{- \frac{2 \alpha}{p +2}}                      +   					h                      \right] 
					\right).
				\end{align*}
			\end{lemma}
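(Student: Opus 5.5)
We estimate the increment $u_h^{(N)}(t)-u_h^{(N)}(\floor{t}_h)$ directly from the mild formulation \eqref{eq:Euler-uN}. Write $t_0\coloneqq\floor{t}_h$, so that $0\le t-t_0<h$. Subtracting the two mild representations gives
\begin{align*}
u_h^{(N)}(t)-u_h^{(N)}(t_0)
&= \bigl(e^{(t-t_0)A}-I\bigr)e^{t_0A}P_Nu_0
+\int_{t_0}^{t} e^{(t-s)A}P_N\mathfrak F\bigl(u_h^{(N)}(\floor{s}_h)\bigr)\dd s
\\&\quad
+\bigl(e^{(t-t_0)A}-I\bigr)\int_0^{t_0}e^{(t_0-s)A}P_N\mathfrak F\bigl(u_h^{(N)}(\floor{s}_h)\bigr)\dd s
+\sigma P_N\bigl(Z^{(\alpha)}(t)-Z^{(\alpha)}(t_0)\bigr)
\\&\eqqcolon J_1+J_2+J_3+J_4 .
\end{align*}
Each $J_i$ is bounded separately in $L^p(\Omega;\sup_t\mathcal H^1)$.

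\emph{Initial-condition term $J_1$.} We use $\|(e^{\tau A}-I)\|_{L(\mathcal H^{1+\eta},\mathcal H^1)}\le C\tau^{\eta/4}$ for $\eta\in(0,4)$. Together with the contractivity of $e^{t_0A}P_N$, this gives $Ch^{\eta/4}\|u_0\|_{\mathcal H^{1+\eta}}$.

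\emph{Noise term $J_4$.} Start from the $\mathcal H^1$ bound in \Cref{lem:Z-beta} with $\psi=1$, which gives $C h^{1/2}N^{\alpha+2}$ for a fixed pair $(t,t_0)$. The supremum over $t$ is harder. The plan is to apply Kolmogorov's continuity theorem, or equivalently a Garsia--Rodemich--Rumsey estimate, to the process $P_NZ^{(\alpha)}$ with moments of order $p$. Since the moment bound is $|t-s|^{1/2}$ in $L^p$, the Hölder exponent in the supremum is $\frac12-\frac1p-\varepsilon$ with constant $CN^{\alpha+2}$. This is why we require $\varepsilon<\frac12-\frac1p$. The result is the term $N^{2+\alpha}h^{\frac12-\frac1p-\varepsilon}$.

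\emph{Nonlinear terms $J_2,J_3$.} These are the main obstacle. We do not want to use Lipschitz bounds here, because they would feed $e^{(N)}_h$ back in. Instead we use the vanishing nonlinearity of \Cref{cor:vanolin}, transferred to $u_h^{(N)}$ as stated there.

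For $J_2$, proceed as in \Cref{ineq:I2}. Write $\mathfrak F(u)=-\nabla\cdot f(\nabla u)$ and use $\|e^{\tau A}\nabla\cdot\|_{L(L^2,\mathcal H^1)}\le C\tau^{-1/2}$. Apply Hölder in time with exponents $p$ and $\frac{p}{p-1}$. This gives
\[
\Bigl(\int_{t_0}^t (t-s)^{-\frac{p}{2(p-1)}}\dd s\Bigr)^{\frac{p-1}{p}}\,\bigl\|f(\nabla u_h^{(N)}(\floor{\cdot}_h))\bigr\|_{L^p([0,T]\times\T^2)}\lesssim h^{\frac12-\frac1p}\bigl[N^{-\frac{2\alpha}{p+2}}+h\bigr].
\]
The additional $h$ accounts for the initial window $[0,t_N]$ and for the grid rounding, where only $|f|\le1$ is available.

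For $J_3$, split the smoothing as $\|(e^{\tau A}-I)\|_{L(\mathcal H^{1+2-\gamma'},\mathcal H^1)}$ and $\|e^{(t_0-s)A}\nabla\cdot\|_{L(L^2,\mathcal H^{3-\gamma})}\lesssim (t_0-s)^{-1+\gamma/4}$. This yields a factor $h^{\frac12-\frac\gamma4}$ times a time integral that is integrable in $L^{p/(p-1)}$ exactly under the hypothesis $\frac{p}{p-1}<\frac{1}{1-\gamma/4}$. Again, \Cref{cor:vanolin} supplies $N^{-\frac{2\alpha}{p+2}}+h$.

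For both nonlinear terms, the supremum over $t$ is moved inside the expectation by bounding the $t$-dependent kernels by $t$-independent ones before applying Minkowski and Hölder, as in \Cref{ineq:I2}. Collecting the four contributions gives the claimed estimate.
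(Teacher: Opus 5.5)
Your proposal follows the paper's proof essentially step for step. Your $J_1,J_4,J_3,J_2$ are the paper's $\hat I_{41},\hat I_{42},\hat I_{43},\hat I_{44}$, and each is handled with the same tool: the semigroup increment \eqref{ineq:SGalpha} with $\theta=\eta/4$; \Cref{lem:Z-beta} with $\psi=1$ plus Kolmogorov; and Hölder in time with exponent $\frac{p}{p-1}$ combined with the vanishing nonlinearity.

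\textbf{The flawed step.} One step is wrong as written, and the paper's proof has the same slip. The paper passes from $(\E\int_0^T\|f\|_{L^2}^p\dd s)^{1/p}$ to $\int_0^T(\E\|f\|_{L^2}^p)^{1/p}\dd s$. This bounds an $L^p(0,T)$ norm of $s\mapsto(\E\|f(s)\|_{L^2}^p)^{1/p}$ by its $L^1(0,T)$ norm, which is the wrong direction.

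After Hölder in time, $f$ enters through its $L^p(\Omega\times[0,T]\times\T^2)$ norm. A time window of length $\ell$ on which only $|f|\le1$ is available therefore contributes $\ell^{1/p}$, not $\ell$.
\begin{itemize}
\item For the window $[0,t_N)$ this is harmless, because $t_N^{1/p}=N^{-\frac{2\alpha}{p+2}}$ by the very choice of $t_N$.
\item On $[0,h)$, however, $\floor{s}_h=0$, so $f(\nabla u_h^{(N)}(\floor{s}_h))=f(\nabla P_Nu_0)$ is deterministic. For $u_0\neq0$ it is of order one uniformly in large $N$. The norm is therefore at least $c\,h^{1/p}$, and no bound $C(N^{-\frac{2\alpha}{p+2}}+h)$ with $C$ independent of $N$ and $h$ can hold.
\end{itemize}
So your sentence ``the additional $h$ accounts for the initial window'' is exactly where the argument breaks. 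The correct bracket is $N^{-\frac{2\alpha}{p+2}}+h^{1/p}$. This produces extra terms $h^{1/2}$ from $J_2$ and $h^{\frac12-\frac{\gamma}{4}+\frac1p}$ from $J_3$, which your bookkeeping does not account for.

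\textbf{How to repair it.} The statement survives, because both nonlinear terms are dominated by the noise term. Use only $|f|\le1$ pathwise.
\begin{itemize}
\item For $J_2$: $\|J_2(t)\|_{\mathcal H^1}\le C\int_{t_0}^t(t-s)^{-1/2}\dd s\le Ch^{1/2}$.
\item For $J_3$: set $\theta\coloneqq\frac12-\frac1p-\varepsilon\in(0,\frac12)$. Then \eqref{ineq:SGalpha} and \Cref{lem:semigroupestimate} give $\|J_3(t)\|_{\mathcal H^1}\le Ch^{\theta}\int_0^{t_0}(t_0-s)^{-\frac12-\theta}\dd s\le Ch^{\frac12-\frac1p-\varepsilon}$.
\end{itemize}
Since $N\ge1$ and $h\le T$, both bounds are at most $CN^{2+\alpha}h^{\frac12-\frac1p-\varepsilon}$. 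Hence the lemma holds, and in fact this lemma does not need \Cref{cor:vanolin} at all. Replace your $J_2,J_3$ estimates by these pathwise bounds.
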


			\begin{proof}
				According to \eqref{ineq:SGalpha} we have for each $\beta \in(0,1]$
				\[
				\left \| (e^{tA} - \operatorname{Id})u \right \|_{\mathcal{H}^1}
				\leq C t^{\beta} \left\|(-A)^{\beta} u\right \|_{\mathcal{H}^{1}}
				\leq C t^{\beta} \left\|u\right \|_{\mathcal{H}^{4\beta+1}}
				\]
				for each $t \in [0,T]$ (see \eqref{norm:H}).
				Therefore, for $\gamma \in (0,2)$  we obtain
				\[
				\left \| (e^{tA} - \operatorname{Id}) \right \|_{L(\mathcal{H}^{3-\gamma},\mathcal{H}^1)}
				\leq C t^{\frac{1}{2} - \frac{\gamma}{4}}.
				\]
				
				\noindent
				We now decompose the difference as follows:
				\begin{align*}
					u_h^{(N)}(t) - u_h^{(N)}(\floor{t}_h)
					& =
					\left(
					e^{t A } - e^{\floor{t}_h A }
					\right) P_N
					u_0
					+
					\sigma P_N \left(Z^{(\alpha)}(t) - Z^{(\alpha)}\left ( \floor{t}_h \right) \right)
					\\ & \quad +
					\int_0^{\floor{t}_h} 
					\left (
					e^{(t-s)A} - e^{\left(\floor{t}_h-s \right)A}
					\right ) P_N 
					\mathfrak{F} \left(  
					u_h^{(N)}(\floor{s}_h)
					\right)
					\dd s 
					\\ & \quad +
					\int_{\floor{t}_h}^t e^{(t-s)A}   
					P_N
					\mathfrak{F} \left(  
					 u_h^{(N)}(\floor{s}_h) \right)
					\dd s 
					\\ &
					\eqqcolon
					\hat{I}_{41}(t) + 
					\hat{I}_{42} (t) + 
					\hat{I}_{43} (t) + 
					\hat{I}_{44}(t).
				\end{align*}
				
				\noindent
                Applying \eqref{ineq:SGalpha}, with $\frac{\eta}{4} \in (0,1]$, we obtain
				\begin{align}  \label {ineq:I41}
					\left( \mathbb{E}
					\sup_{t \in [0,T]}
					\left \|
					\hat{I}_{41} (t)
					\right \|_{\mathcal{H}^1} ^p \right)^{\frac{1}{p}} 
					& =
					\sup_{t \in [0,T]}
					\left \|
					\hat{I}_{41} (t)
					\right \|_{\mathcal{H}^1} 
					\\
					&
					\leq
					\sup_{t \in [0,T]}
					\left \|
					\left(
					e^{t A } - e^{\floor{t}_h A }
					\right) P_N
					u_0
					\right \|_{\mathcal{H}^1}          \nonumber
					\\
					&
					\leq
					\sup_{t \in [0,T]}
					\left \|
					e^{ \floor{t}_h A }
					\right\|_{L \left(\mathcal{H}^{1} ,
						\mathcal{H}^1  
						\right)}
					\left \|
					 \left( e^{(t- \floor{t}_h) A } - \operatorname{Id} \right)  
					 P_N
					u_0
					\right \|_{\mathcal{H}^{1 }}          \nonumber
					\\
					&
						\leq 
					C \sup_{t \in [0,T]}
					\left | t-\floor{t}_h \right|^{\frac{\eta }{4}}
					\left \|
					\left(
					\left(-\Delta \right)^2 \right)^{\frac{\eta}{4}}
					u_0
					\right \|_{\mathcal{H}^{1}} \nonumber
					\\ & \leq C h^{\frac{\eta }{4}}
					\left \|
					u_0
					\right \|_{\mathcal{H}^{1+ \eta }}. \nonumber
				\end{align} 
				Applying \Cref{lem:Z-beta}, we derive 
				\begin{align*} 
					\left( \mathbb{E}
					\left \|
					P_N \left(Z^{(\alpha)}(t) - Z^{(\alpha)} \left( s \right) \right) 
					\right \|_{\mathcal{H}^1}^p \right)^{\frac1p}
					& \leq 
					C
					N^{\alpha   + 2 \psi} |t-s|^{\frac{\psi}{2}}
 				\end{align*}
 				for each $t,s \in [0,T]$,  and $\psi \in (0,1]$.
				Since $Z$ is Gaussian, Kolmogorov's continuity theorem (see \cite[Theorem~2.2.3]{Oks:2000})
				 yields for a $\zeta \in \big(0 , \frac{\psi}{2} - \frac{1}{p} \big)$ 
				that 
				\begin{align*}
					\left( \mE 
					\sup_{t \in [0,T], t \neq \floor{t}_h} 
					\frac{\left \|
						P_N \left(Z^{(\alpha)}(t) - Z^{(\alpha)} \left( \floor{t}_h \right) \right) 
						\right \|_{\mathcal{H}^1}^p}{\left|t-\floor{t}_h \right|^{p \zeta}}
					\right)^{\frac{1}{p}}
					< C N^{\alpha   + 2 \psi}.
				\end{align*}
			  By choosing $\psi =1$, we derive the bound
				\begin{align}  \label {ineq:I42}
					\left( \mE 
					\sup_{t \in [0,T]}
					\left \|
					\hat{I}_{42} (t)
					\right \|_{\mathcal{H}^1}^p \right)^{\frac{1}{p}}
					& = \sigma
					\left( \mE 
					\sup_{t \in [0,T]}
					\left \|
					P_N \left(Z^{(\alpha)}(t) - Z^{(\alpha)}\left ( \floor{t}_h \right) \right) 
					\right \|_{\mathcal{H}^1}^p \right)^{\frac{1}{p}} 
					\\
					& \leq 
					C \sigma N^{2 + \alpha  }\sup_{t \in [0,T]}
					\left|t-\floor{t}_h \right|^{\zeta} \nonumber
					\\
					& \leq 
					C \sigma N^{2 + \alpha    }
					h^{\frac{1}{2} - \frac{1}{p} -\varepsilon}. \nonumber
				\end{align}
                
				\noindent
				From \eqref{ineq:SGalpha} we get
				\begin{align*}
					\left \|
					\left(
					e^{h A} - \operatorname{Id}
					\right) y
					\right \|_{\mathcal{H}^1}
					& \leq
					C h^{\frac{1}{2}- \frac{\gamma}{4}} \left \| \left(- A \right)^{\frac{1}{2}- \frac{\gamma}{4}} y \right \|_{\mathcal{H}^{1}}
					\leq C h^{\frac{1}{2}- \frac{\gamma}{4}} \left \| y \right \|_{\mathcal{H}^{3- \gamma}}
				\end{align*}
				for $y \in \mathcal{H}^{3- \gamma}$ with $\gamma \in (0,2)$.
				Thus, by applying \Cref{Conva:Ratealpha} for the case $s \geq t_N + h$, the Minkowski inequality, and Hölder's inequality, as in the proof of \Cref{ineq:I2}, we derive	
				\begin{align}  \label {ineq:I43}
					& \left( \mathbb{E} 
					\sup_{t \in [0,T]}
					\left \|
					\hat{I}_{43} (t)
					\right \|_{\mathcal{H}^1} ^p \right)^{\frac{1}{p}}
					\\ & \leq 
					\left( \mathbb{E}
					\sup_{t \in [0,T]}
					\left(
					\int_0^{\floor{t}_h} 
					\left \|
					e^{(t-s)A} - e^{\left(\floor{t}_h-s \right)A}
					\right \|_{L\left(
						\mathcal{H}^{-1}, \mathcal{H}^1
						\right)}
					\left \|
					f \left( \nabla u_h^{(N)}(\floor{s}_h)
					\right)
					\right \|_{L^2} \dd s \right)^p
					\right )^{\frac{1}{p}} \nonumber
					\\
					& 
					\leq 
					\sup_{t \in [0,T]}
					\left \|
					e^{\left(t- \floor{t}_h \right)A} - \operatorname{Id}
					\right \|_{L\left(
						\mathcal{H}^{3- \gamma}, \mathcal{H}^1
						\right)} \nonumber
					\\
					& \quad \times
					\left(
					\int_0^{\floor{T}_h}
					\left\| e^{\left(\floor{T}_h-s \right)A}
					\right\|_{L\left(\mathcal{H}^{-1} , 
						\mathcal{H}^{3- \gamma}
						\right)}^{\frac{p}{p-1}}  \dd s  \right)^{\frac{p-1}{p}} 
					\left( \mathbb{E}
					\int_0^T
					\left \| f \left( \nabla u_h^{(N)} \left(\floor{s}_h \right) \right) \right \|_{L^2}^{ p} \dd s  \right)^{\frac{1}{p}} 
					 \nonumber
					\\
					& 
					\leq 
					C h^{\frac{1}{2}- \frac{\gamma}{4}}
					\left(
					\int_0^{\floor{T}_h} 
					s^{\left(-1+\frac{ \gamma}{4}\right)\frac{p}{p-1}}  \dd s  \right)^{\frac{p-1}{p}} \nonumber
					\int_0^T
					\left( \mathbb{E}
					\left \| f \left( \nabla u_h^{(N)} \left(\floor{s}_h \right) \right) \right \|_{L^2}^{ p}  \right)^{\frac{1}{p}} 
					\dd s  \nonumber
						\\
					& 
					\leq C 
					h^{\frac{1}{2}- \frac{\gamma}{4}}   
                    \left[ 
                    N^{- \frac{2\alpha}{p +2}} 
                    +
                     \int_0^{t_N + h}
					\left( \mathbb{E}
					\left \| f \left( \nabla u_h^{(N)} \left(\floor{s}_h \right) \right) \right \|_{L^2}^{ p}  \right)^{\frac{1}{p}} 
					\dd s
                     \right]
                     \nonumber
                     \\
                     & 
					\leq C h^{\frac{1}{2}- \frac{\gamma}{4}}   
                    \left[  
                    N^{- \frac{2 \alpha}{p +2}} 
                    +  
					h 
                    \right]
                    . \nonumber 			
				\end{align}

				\noindent
				Furthermore, by the Minkowski inequality, Hölder's inequality,
                and the same splitting argument as in \eqref{ineq:I43}, we obtain 
				\begin{align}  \label {ineq:I44}
					  \left( \mathbb{E}
					\sup_{t \in [0,T]}
					\left \|
					\hat{I}_{44} (t)
					\right \|_{\mathcal{H}^1} ^p \right)^{\frac{1}{p}}
					  &  \leq 
					\left( \mathbb{E} \sup_{t \in [0,T]} \left(
					\int_{\floor{t}_h}^t \frac{(t-s)^{-\frac{1}{2}} }{\sqrt{2e}}   
					\left \|
					f \left( \nabla
					u_h^{(N)}(\floor{s}_h) \right)
					\right \|_{L^2} 
					\dd s  \right)^{p}	\right )^{\frac{1}{p}} 
					\\ & 
					\leq 
					\left( \mathbb{E} \sup_{t \in [0,T]} 
					\left(
					\int_{\floor{t}_h}^t 
					(t-s)^{- \frac{p}{2(p-1)}}  \dd s  \right)^{p-1}
					\int_{\floor{t}_h}^t 
					\left \| f \left( \nabla u_h^{(N)}(\floor{s}_h) \right) \right \|_{L^2}^{p} \dd s  
					\right)^{\frac{1}{p}} 
						\nonumber 
					\\ & 
					\leq C_p
					\left( \mathbb{E} \sup_{t \in [0,T]}
					\left( t- 
					{\floor{t}_h}    \right)^{\frac{p-2}{2 }}
					\int_{0}^T 
					\left \| f \left( \nabla u_h^{(N)}(\floor{s}_h) \right) \right \|_{L^2}^{p} \dd s  
					\right)^{\frac{1}{p}} 
						\nonumber 
					\\ & 
					\leq C_p
					{h}^{\frac{1}{2} - \frac{1}{p}}
					\int_{0}^T 
					\left( \mathbb{E}
					\left \| f \left( \nabla u_h^{(N)}(\floor{s}_h) \right) \right \|_{L^2}^{p}  
					\right)^{\frac{1}{p}} \dd s 
					\nonumber  
					\\ &
					\leq C_{p,T} {h}^{\frac{1}{2} - \frac{1}{p}} 
                    \left[  
                    N^{- \frac{2 \alpha}{p +2}} 
                    +  
					h 
                    \right]
                     . \nonumber
				\end{align}
							
				\noindent
				Finally, applying the triangle inequality and combining \eqref{ineq:I41}, \eqref{ineq:I42}, \eqref{ineq:I43}, and \eqref{ineq:I44}, we conclude 
				\begin{align*} 
					& \left (\E   \sup_{t \in [0,T]} \left \| u_h^{(N)}(t) - u_h^{(N)}(\floor{t}_h)
					\right\|^p_{\mathcal{H}^1} \right)^{\frac{1}{p}} 
					\\ & 
					\leq C_{p,T} \left(
					h^{\frac{\eta }{4}}
					\left \|
					u_0
					\right \|_{\mathcal{H}^{1+ \eta }}
					+
				\sigma N^{2  + \alpha } h^{\frac{1}{2} - \frac{1}{p}  - \varepsilon}
					+ 
                    h^{\frac{1}{2}- \frac{\gamma}{4}} \left[  
                    N^{- \frac{2 \alpha}{p +2}} 
                    +  
					h 
                    \right] 
                    +
					{h}^{\frac{1}{2} - \frac{1}{p}}\left[  
                    N^{- \frac{2 \alpha}{p +2}} 
                    +  
					h 
                    \right]   
					\right).
				\end{align*}
			This establishes the claim.
			\end{proof}
			
			\begin{lemma}  \label[lemma]{ineq:I4}
				Under the assumptions of \Cref{ineqaux:I4},  we have
				\begin{align*}
					\left( \mathbb{E}
					\sup_{t \in [0,T]}
					\left \|
					I_4(t)
					\right \|_{\mathcal{H}^1}^p 
					\right)^{\frac{1}{p}}    
					& \leq 
					C 
					\sqrt{T}
					\left(
					h^{\frac{\eta }{4}}
					\left \|
					u_0
					\right \|_{\mathcal{H}^{1+ \eta }}
					+
					  N^{2  + \alpha }
					h^{\frac{1}{2} - \frac{1}{p}  - \varepsilon}
					+
					\left[  
                    N^{- \frac{2 \alpha}{p +2}} 
                    +  
					h 
                    \right] 
					\left(
					h^{\frac{1}{2}- \frac{\gamma}{4}} 
					+
					{h}^{\frac{1}{2} - \frac{1}{p}}
					\right)
					\right).
				\end{align*}
			\end{lemma}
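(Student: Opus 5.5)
We bound $I_4$ by reducing it to the time-increment estimate of \Cref{ineqaux:I4}, using the Lipschitz continuity of $\mathfrak F:\mathcal H^1\to\mathcal H^{-1}$ from \cite[Section~4]{BR}, exactly as in the proof of \Cref{ineq:I3}. Since $P_N$ is a contraction on $\mathcal H^{-1}$ and commutes with $e^{(t-s)A}$, we have for every $t\in[0,T]$
\begin{align*}
\left\|I_4(t)\right\|_{\mathcal H^1}
\leq \int_0^t \left\|e^{(t-s)A}\right\|_{L(\mathcal H^{-1},\mathcal H^1)}
\left\|\mathfrak F\bigl(u_h^{(N)}(s)\bigr)-\mathfrak F\bigl(u_h^{(N)}(\floor{s}_h)\bigr)\right\|_{\mathcal H^{-1}}\dd s .
\end{align*}
By the smoothing estimate already used in \Cref{ineq:I3}, $\|e^{(t-s)A}\|_{L(\mathcal H^{-1},\mathcal H^1)}\leq \frac{1}{\sqrt{2e}}(t-s)^{-1/2}$. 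Applying the Lipschitz bound then gives
\begin{align*}
\left\|I_4(t)\right\|_{\mathcal H^1}
\leq C\int_0^t (t-s)^{-\frac12}\,\left\|u_h^{(N)}(s)-u_h^{(N)}(\floor{s}_h)\right\|_{\mathcal H^1}\dd s .
\end{align*}

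Next we pull out the supremum over $s$ inside the integral, so that the bound becomes pathwise
\begin{align*}
\sup_{t\in[0,T]}\left\|I_4(t)\right\|_{\mathcal H^1}
\leq C\,\sup_{t\in[0,T]}\int_0^t (t-s)^{-\frac12}\dd s\;\sup_{s\in[0,T]}\left\|u_h^{(N)}(s)-u_h^{(N)}(\floor{s}_h)\right\|_{\mathcal H^1}.
\end{align*}
The time integral equals $2\sqrt t\leq 2\sqrt T$. Taking the $L^p(\Omega)$-norm yields
\begin{align*}
\Bigl(\E\sup_{t\in[0,T]}\|I_4(t)\|_{\mathcal H^1}^p\Bigr)^{\frac1p}
\leq C\sqrt T\,\Bigl(\E\sup_{s\in[0,T]}\bigl\|u_h^{(N)}(s)-u_h^{(N)}(\floor{s}_h)\bigr\|_{\mathcal H^1}^p\Bigr)^{\frac1p}.
\end{align*}
This step is what produces the explicit factor $\sqrt T$ in the statement.

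Finally, we insert \Cref{ineqaux:I4}. Its four terms are $h^{\eta/4}\|u_0\|_{\mathcal H^{1+\eta}}$, $N^{2+\alpha}h^{\frac12-\frac1p-\varepsilon}$, $h^{\frac12-\frac\gamma4}\bigl[N^{-\frac{2\alpha}{p+2}}+h\bigr]$ and $h^{\frac12-\frac1p}\bigl[N^{-\frac{2\alpha}{p+2}}+h\bigr]$. Factoring the common bracket $\bigl[N^{-\frac{2\alpha}{p+2}}+h\bigr]$ out of the last two gives exactly the asserted bound.

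The only delicate point is measurability and the validity of the Lipschitz estimate pathwise. The estimate from \cite{BR} is deterministic, so it applies for each $\omega$, and $u_h^{(N)}$ has continuous paths in $\mathcal H^1$ because it is spectrally truncated. Hence the supremum can be taken pathwise before the expectation, and no Gr\"onwall argument is needed for this term.
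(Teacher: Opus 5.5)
Your proof is correct and follows the same route as the paper. The paper's one-line estimate uses the same ingredients: the smoothing bound $\|e^{(t-s)A}\|_{L(\mathcal H^{-1},\mathcal H^1)}\leq \frac{1}{\sqrt{2e}}(t-s)^{-1/2}$, Lipschitz continuity of $\mathfrak F:\mathcal H^1\to\mathcal H^{-1}$, and replacing the increment by its global supremum so that the kernel integrates to at most $2\sqrt T$, followed by inserting \Cref{ineqaux:I4}. You simply make these steps explicit and pathwise.
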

			
			\begin{proof}
				By \Cref{ineqaux:I4} it holds that
				\begin{align*}
					  \left( \mathbb{E}
					\sup_{t \in [0,T]}
					\left \|
					I_4(t)
					\right \|_{\mathcal{H}^1}^p 
					\right)^{\frac{1}{p}}  
					& \leq 
					\int_0^T (T-s)^{-\frac{1}{2}}   \left( \mathbb{E} \sup_{t \in [0,T]} \left \| u_h^{(N)}(t) - u_h^{(N)}(\floor{t}_h)
					\right\|^p_{\mathcal{H}^1} \right)^{\frac{1}{p}} 
					\dd s 
					\\
					& \leq   C_p
					\sqrt{T}
					\left(
					h^{\frac{\eta }{4}}
					\left \|
					u_0
					\right \|_{\mathcal{H}^{1+ \eta }}
					+
					\sigma  N^{2  + \alpha }
					 h^{\frac{1}{2} - \frac{1}{p} - \varepsilon } 
					+
					\left[  
                    N^{- \frac{2 \alpha}{p +2}} 
                    +  
					h 
                    \right] 
					\left(
					h^{\frac{1}{2}- \frac{\gamma}{4}} 
					+
					{h}^{\frac{1}{2} - \frac{1}{p}}
					\right)
					\right).
				\end{align*}
			This confirms the claim.
			\end{proof}

		
		\section{Proof of the Main Result}  \label {sec:mainproof}

		Finally, we can make a proof of the main result of this chapter \Cref{thm:mainEuler}.
		\begin{proof}[Proof of \Cref{thm:mainEuler}]
			We define the auxiliary function  
			$$
			k(s)  \coloneqq 
			\left(\mathbb{E} 
			\sup_{t \in [0,s]}
			\left \| 
			e^{(N)}_h (t) 
			\right \|_{\mathcal{H}^1}^p 
			\right )^{\frac{1}{p}}
            , \quad
            \text{ for   $s \in [0,T]$}
            $$  
            and the constant
			\begin{align*}
				a  &
				\coloneqq C \left( 
				\| u_0\|_{\mathcal{H}^{1+\eta }}
				N^{-\eta }
				+ 
				N^{-2 + \gamma - \frac{2 \alpha}{p + 2}}
				\right) 
                \\ 
                & \qquad
				+ C
				\left(
				h^{\frac{\eta }{4}}
				\left \|
				u_0
				\right \|_{\mathcal{H}^{1+ \eta }}
				+
				   N^{2  + \alpha }
				 h^{\frac{1}{2} - \frac{1}{p} - \varepsilon } 
				+
				\left[ N^{- \frac{2 \alpha}{p +2}}  + h  \right] 
				\left(
				h^{\frac{1}{2}- \frac{\gamma}{4}} 
				+
				{h}^{\frac{1}{2} - \frac{1}{p}}
				\right)
				\right).
			\end{align*}
			By combining the results of \Cref{ineq:I1}, \Cref{ineq:I2}, \Cref{ineq:I3}, and \Cref{ineq:I4}, 
            we obtain for each~$r \in [0, T]$
			\begin{align*} 
				k(r) & =
				\left( \mathbb{E}
				\sup_{t \in [0,r]}
				\left \| 
				e^{(N)}_h (t) 
				\right \|_{\mathcal{H}^1}^p 
				\right )^{\frac{1}{p}}  \\
				& \leq 
				\left( \mathbb{E}
				\sup_{t \in [0,r]}
				\left \|
				I_1(t)
				\right \|_{\mathcal{H}^1}^p 
				\right )^{\frac{1}{p}}  
				+
				\left( \mathbb{E}
				\sup_{t \in [0,r]}
				\left \|
				I_2(t)
				\right \|_{\mathcal{H}^1}^p 
				\right)^{\frac{1}{p}}  
				+
				\left( \mathbb{E}
				\sup_{t \in [0,r]}
				\left \|
				I_4(t)
				\right \|_{\mathcal{H}^1}^p 
				\right )^{\frac{1}{p}}    
				+ \left( \mathbb{E}
				\sup_{t \in [0,r]}
				\left \|
				I_3(t)
				\right \|_{\mathcal{H}^1}^p 
				\right)^{\frac{1}{p}}  
				\\ 
				& 
				\leq C \left( 
				\| u_0\|_{\mathcal{H}^{1+\eta }}
				N^{-\eta }
				+ 
				N^{-2 + \gamma - \frac{2 \alpha}{p + 2}}
				\right)
				\\ & \quad 
				+ C
				\left(
				h^{\frac{\eta }{4}}
				\left \|
				u_0
				\right \|_{\mathcal{H}^{1+ \eta }}
				+
				   N^{2  + \alpha }
				 h^{\frac{1}{2} - \frac{1}{p} - \varepsilon } 
				+
				\left[ N^{- \frac{2 \alpha}{p +2}}  + h  \right] 
				\left(
				h^{\frac{1}{2}- \frac{\gamma}{4}} 
				+
				{h}^{\frac{1}{2} - \frac{1}{p}}
				\right)
				\right)
				\\ & \quad  +
				\frac{1}{\sqrt{2e}} \int_0^r (r-s)^{-\frac{1}{2}} \left( \mathbb{E}   \sup_{\tau \in [0,s]}
				\|e^{(N)}_h (\tau ) \|_{\mathcal{H}^1}^p
				\right)^{\frac{1}{p}}  \dd s 
				\\ & 
				=
				a + \frac{1}{\sqrt{2e}} \int_0^r (r-s)^{-\frac{1}{2}} k(s)  \dd s .
			\end{align*}
			 
			\noindent
            Note that by the definition of $k$ \eqref{e:mild-eh}, and the boundedness of $f$ it is easy to check that $k$ is finite and bounded. Thus the previous inequality allows us, by applying the Grönwall result from \Cref{lem:G-HDG}, to show that there is a constant 
            $C = C\left(\alpha , \eta , p , \gamma,  \varepsilon, \sigma, L, T, \left \| u_0 \right \|_{W^{1 , \infty}} \right) >0$ such that
			\begin{align*}
				k(r) 
				& =
				\left( \mathbb{E}
				\sup_{t \in [0,r]}
				\left \| 
				e^{(N)}_h (t) 
				\right \|_{\mathcal{H}^1}^p 
				\right)^{\frac{1}{p}} 
				\\
				& \leq 
                a  E_{\frac{1}{2}}   \left( \frac{\pi}{2e} T \right) 
				\\
				& \leq C \left( 
				\| u_0\|_{\mathcal{H}^{1+\eta }}
				N^{-\eta }
				+ 
				N^{-2 + \gamma - \frac{2 \alpha}{p + 2}}
				\right)
				\\ & \quad 
				+ C
				\left(
				h^{\frac{\eta }{4}}
				\left \|
				u_0
				\right \|_{\mathcal{H}^{1+ \eta }}
				+
				   N^{2  + \alpha }
				 h^{\frac{1}{2} - \frac{1}{p} - \varepsilon } 
				+
				\left[ N^{- \frac{2 \alpha}{p +2}}  + h  \right] 
				\left(
				h^{\frac{1}{2}- \frac{\gamma}{4}} 
				+
				{h}^{\frac{1}{2} - \frac{1}{p}}
				\right)
				\right)  
				\\
				& \leq  C \left( 
				\| u_0\|_{\mathcal{H}^{1+\eta }} \left( h^{\frac{\eta }{4}} +
				N^{-\eta }
				\right)
				+
				   N^{2  + \alpha }
				 h^{\frac{1}{2} - \frac{1}{p} - \varepsilon } 
				+ 
				N^{-2 + \gamma - \frac{2 \alpha}{p + 2}}
				+
				{h}^{\frac{1}{2} - \frac{\gamma}{4}}
				\left[ N^{- \frac{2 \alpha}{p +2}}  + h  \right] 
				\right).
			\end{align*}
		This establishes the desired result.
		\end{proof}
		
\section{Numerical Simulations} \label{num:simula}

Simulations are conducted on the domain $[0,1]^2$ up to time $T = 10$ using a step size of $h = 0.001$, $ n = T/h = 10^4$ steps, and parameters $\sigma = 0.11$ and $\delta = 0.02$.
    This differs from the normalisation $\delta=1$ used in the
theoretical analysis in the previous sections. The smaller value is chosen in order to
make the linearly unstable modes visible. Indeed, linearization
around a spatially homogeneous state yields the modal growth rate
$
   -\delta\mu_k^2 + \mu_k
$,
so that modes satisfying $0<\mu_k<\delta^{-1}$ are linearly
unstable.
We define $\lambda_k \coloneqq  -\delta \mu_k^2$ for each $k \in \ZZ$.
    The initial condition $u_0$ is set to the constant value $0$. The values presented in the tables below are sample means of the displayed norms over 100 simulations at time $T = 10$.
For each nonzero Fourier mode $k \in \ZZ$, the exponential Euler scheme from \Cref{def:exponential-Euler} takes the form
\begin{align*}
    \widehat{v}^{(N)}_{j+1,k}
    &=
    e^{-\delta h\mu_k^2}\widehat{v}^{(N)}_{j,k}
    +
    \frac{1-e^{-\delta h\mu_k^2}}
         {\delta\mu_k^2}
    \widehat{P_N\mathfrak{F}\bigl(v^{(N)}_j\bigr)}_k
    +
    \sigma\widehat{P_N\widetilde{Z}_j}_k,
\\
    \widehat{P_N\widetilde{Z}_j}_k
    & =
    \mu_k^{\frac{\alpha}{2}}
    \sqrt{
        \frac{1-e^{-2\delta h\mu_k^2}}
             {2\delta\mu_k^2}
    }\,
    \xi_{j,k},
\end{align*}
where  $\left\{ \xi_{j,k} \,  : \,  j = 0 , \hdots , n - 1 \,  ; \, k \in \ZZ \right \} $ is a family of independent standard Gaussian random variables.
The same realisations of $\xi_{j,k}$ are used in the recursions for
$v^{(N)}$ and $Z^{(N)}$.
Similarly, the exponential Euler scheme for the stochastic convolution
is given by
\begin{align*}
    \widehat{Z}^{(N)}_{j+1,k}
    &=
    e^{-\delta h\mu_k^2}\widehat{Z}^{(N)}_{j,k}
    +
    \sigma\widehat{P_N\widetilde{Z}_j}_k,
    \qquad
    \widehat{Z}^{(N)}_{0,k}=0.
\end{align*} 
The difference, defined by
\[
    p_h^{(N)}(jh)
    \coloneqq
    v_j^{(N)}-Z_j^{(N)},
\]
represents the nonlinear remainder.

	\noindent
	\subsection{Result for Roughness Parameter $\alpha = 0.4$} For $\alpha = 0.4$, the $\infty$-norm of the gradient of $v^{(N)}$ does not exhibit systematic growth over the tested resolutions.

	\begin{table}[H]
		\centering
		\begin{tabular}{lrrr}
			\toprule
			Number of Fourier modes & $\left\| v^{(N)} \right \|_{\infty}$ & $\left\| \nabla  v^{(N)} \right \|_{\infty}$  & $\left\| v^{(N)} \right \|_{C^1}$ \\
			\midrule
			${(2^{2})}^2 =16$ & 0.130713 & 0.614351 & 0.745064 \\
			${(2^{3})}^2 =64$ & 0.122107 & 0.670314 & 0.792421 \\
			${(2^{4})}^2 =256$ & 0.133159 & 0.746035 & 0.879194 \\
			${(2^{5})}^2 =1024$ & 0.139909 & 0.760671 & 0.900580 \\
			${(2^{6})}^2 =4096$ & 0.141632 & 0.761279 & 0.902910 \\
			${(2^{7})}^2 =16384$ & 0.143825 & 0.760267 & 0.904092 \\
			${(2^{8})}^2 =65536$ & 0.144930 & 0.751583 & 0.896513 \\
			${(2^{9})}^2 =262144$ & 0.146088 & 0.738105 & 0.884193 \\
			${(2^{10})}^2 =1048576$ & 0.147054 & 0.721802 & 0.868856 \\
			\bottomrule
		\end{tabular}
			\caption[Norms of the exponential Euler scheme in Fourier space for $\alpha = 0.4$.]{Norms of the exponential Euler scheme in Fourier space with respect to the number of Fourier modes for $\alpha = 0.4$.}
		\label{tabelle:1}
	\end{table}
	
	\noindent
	Here, no decay of $v^{(N)}-Z^{(N)}$ is visible over the tested resolutions. More precisely, the simulated data do not resolve the expected slow decay of the nonlinear contribution. In the simulation of $v^{(N)}$, the noise-dominated structure of $Z^{(N)}$ is masked by the characteristic hill growth shown in the figure below.
	\begin{figure}[H]
		\begin{center}
			\includegraphics[scale = 0.46]{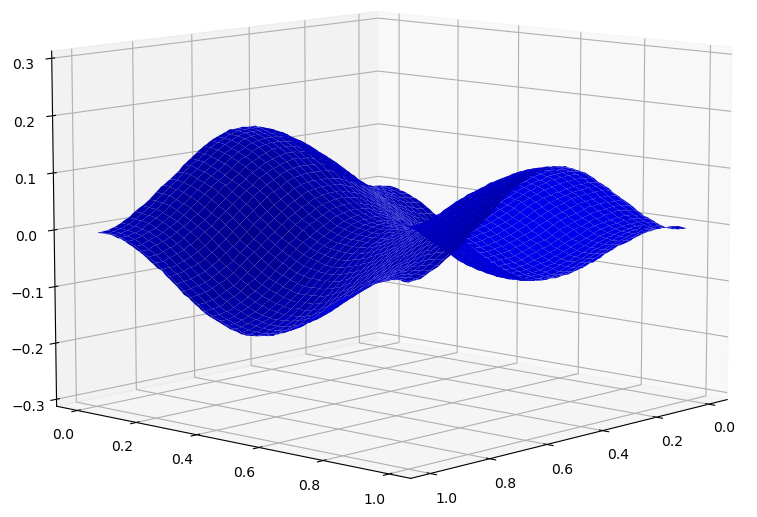}
			\caption[Simulation of $v^{(N)}$ for $\alpha = 0.4$.]{Simulation of $v^{(N)}$ at time $T=10$ for $\alpha = 0.4$ and $N = 2^7$.  } 
		\end{center}
	\end{figure}
	 
	\noindent
	
	\noindent
	The table below shows that, in the numerical simulations, the error $\|\nabla p^{(N)}_h\|_\infty$ stabilises at approximately $0.45$; thus, no decay is visible over the tested resolutions, we would need a much larger $N$.
		\begin{table}[H]
		\centering
		\begin{tabular}{lrrr}
			\toprule
			Number of Fourier modes &  $\left\| p^{(N)}_h \right \|_{\infty}$ &  $\left\| \nabla p^{(N)}_h \right \|_{\infty}$ &  $\left\|  p^{(N)}_h \right \|_{C^1}$ \\
			\midrule
			${(2^{2})}^2 =16$ & 0.102194 & 0.436299 & 0.538493 \\
			${(2^{3})}^2 =64$ & 0.108146 & 0.464480 & 0.572626 \\
			${(2^{4})}^2 =256$ & 0.124719 & 0.474413 & 0.599132 \\
			${(2^{5})}^2 =1024$ & 0.135589 & 0.459246 & 0.594835 \\
			${(2^{6})}^2 =4096$ & 0.139585 & 0.451026 & 0.590611 \\
			${(2^{7})}^2 =16384$ & 0.142452 & 0.452216 & 0.594668 \\
			${(2^{8})}^2 =65536$ & 0.144310 & 0.453707 & 0.598017 \\
			${(2^{9})}^2 =262144$ & 0.145731 & 0.456384 & 0.602115 \\
			${(2^{10})}^2 =1048576$ & 0.146847 & 0.458761 & 0.605608 \\
			\bottomrule
		\end{tabular}
		\caption[Norms of the nonlinear remainder for $\alpha = 0.4$.]{Norms of the nonlinear remainder with respect to the number of Fourier modes with $\alpha = 0.4$.}
	\label{tabelle:2}
	\end{table}
\noindent
\Cref{tabelle:2} summarizes the numerical values of the norms of the nonlinear remainder 
for $\alpha = 0.4$ and increasing numbers of Fourier modes. 
The corresponding dependence on the Fourier resolution is illustrated in \Cref{figure:line}.
	\begin{figure}[H]
		\centering
		\includegraphics[width=0.7\textwidth]{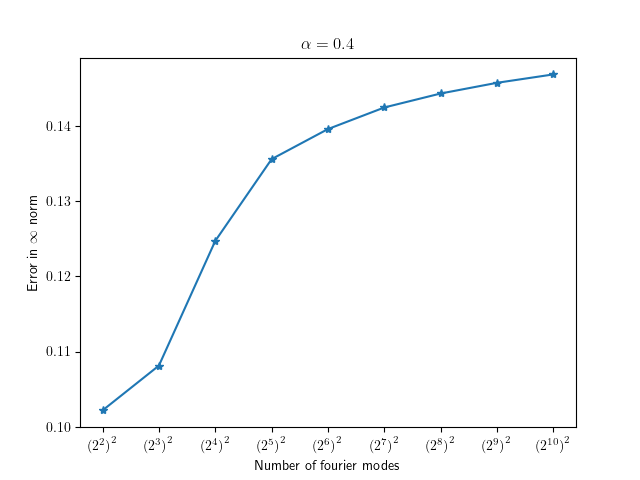} \\
		\caption[Simulation of $\big\| p^{(N)}_h \big \|_{\infty}$ for $\alpha = 0.4$]{Simulation of $\big\| p^{(N)}_h \big \|_{\infty}$ for $\alpha = 0.4$ and increasing Fourier resolution $N$. 
		}
		\label{figure:line}
	\end{figure}
  
	\subsection{Result for Roughness Parameter $\alpha \in \{0.7, 0.9\}$ } 
	Increasing $\alpha$ is expected to cause a significant increase in the gradient of $v^{(N)}$. Simulations were conducted for $\alpha = 0.7$ and $\alpha = 0.9$. In both cases, the nonlinear remainder~$p^{(N)}_h$ decreases towards zero over the tested resolutions.
	The following presents the data generated for $v^{(N)}$ with $\alpha = 0.7$:
	
	\begin{table}[H]
		\centering
		\begin{tabular}{lrrr}
			\toprule
			Number of Fourier modes & $\left\| v^{(N)} \right \|_{\infty}$ & $\left\| \nabla  v^{(N)} \right \|_{\infty}$  & $\left\| v^{(N)} \right \|_{C^1}$ \\
			\midrule
			${(2^{2})}^2 =16$ & 0.256342 & 1.485884 & 1.742225 \\
			${(2^{3})}^2 =64$ & 0.194115 & 2.131889 & 2.326005 \\
			${(2^{4})}^2 =256$ & 0.147894 & 3.378832 & 3.526726 \\
			${(2^{5})}^2 =1024$ & 0.114401 & 5.150998 & 5.265400 \\
			${(2^{6})}^2 =4096$ & 0.083890 & 7.595636 & 7.679525 \\
			${(2^{7})}^2 =16384$ & 0.060421 & 11.047536 & 11.107957 \\
			${(2^{8})}^2 =65536$ & 0.043793 & 15.591852 & 15.635645 \\
			${(2^{9})}^2 =262144$ & 0.031083 & 22.357294 & 22.388378 \\
			${(2^{10})}^2 =1048576$ & 0.021860 & 31.011626 & 31.033486 \\
			\bottomrule
		\end{tabular} 
	\caption[Norms of the exponential Euler scheme in Fourier space for $\alpha = 0.7$.]{Norms of the exponential Euler scheme in Fourier space with respect to the number of Fourier modes for $\alpha = 0.7$.}
	
	\label{tabelle:3}
	\end{table}
	
	\noindent
	The norm of $\nabla v^{(N)}$ increases gradually as the number of Fourier modes increases. This behaviour is consistent with the suppression of the nonlinear surface current.  
	\noindent
	The graph below no longer exhibits hill-like growth and instead only displays noise.
	\begin{figure}[H]
		\begin{center}
			\includegraphics[scale = 0.5]{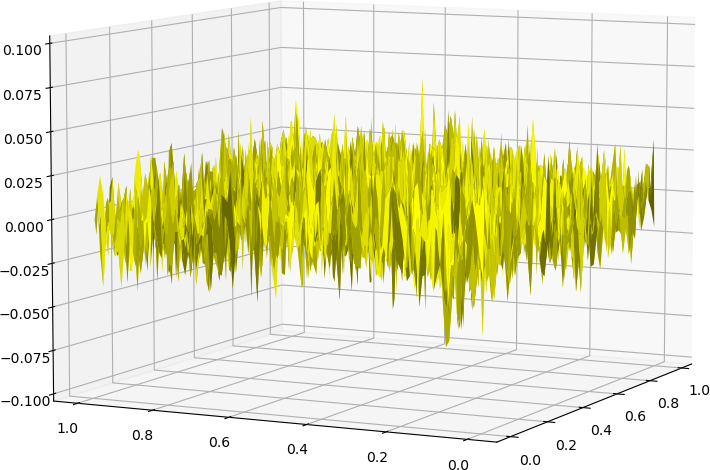}
			\caption[Simulation of $v^{(N)}$ for $\alpha = 0.7$.]{Simulation of $v^{(N)}$ at time $T=10$ for $\alpha = 0.7$ and $N = 2^7$.
			}
			\label{figure:yellow}
		\end{center}
	\end{figure}
	
	\noindent
	The numerical values of $p^{(N)}_h$ decrease towards zero over the tested resolutions:
	
	\begin{table}[H]
		\centering
		\begin{tabular}{lrrr}
			\toprule
			Number of Fourier modes & $\left\| p^{(N)}_h \right \|_{\infty}$ &  $\left\| \nabla p^{(N)}_h \right \|_{\infty}$ &  $\left\|  p^{(N)}_h \right \|_{C^1}$ \\
			\midrule
			${(2^{2})}^2 =16$ & 0.065330 & 0.320001 & 0.385331 \\
			${(2^{3})}^2 =64$ & 0.025951 & 0.149824 & 0.175775 \\
			${(2^{4})}^2 =256$ & 0.009055 & 0.064278 & 0.073333 \\
			${(2^{5})}^2 =1024$ & 0.002977 & 0.025431 & 0.028408 \\
			${(2^{6})}^2 =4096$ & 0.001025 & 0.009572 & 0.010598 \\
			${(2^{7})}^2 =16384$ & 0.000367 & 0.003833 & 0.004200 \\
			${(2^{8})}^2 =65536$ & 0.000146 & 0.001659 & 0.001805 \\
			${(2^{9})}^2 =262144$ & 0.000059 & 0.000690 & 0.000749 \\
			${(2^{10})}^2 =1048576$ & 0.000024 & 0.000286 & 0.000310 \\
			\bottomrule
		\end{tabular}
		\caption[Norms of the nonlinear remainder for $\alpha = 0.7$.]{Norms of the nonlinear remainder with respect to the number of Fourier modes with $\alpha = 0.7$.}
		\label{tabelle:4}
	\end{table}
	
	\noindent
	\Cref{tabelle:4} summarizes the numerical values of the norms of the nonlinear remainder 
	for $\alpha = 0.7$ and increasing numbers of Fourier modes.
	\noindent
	The corresponding decay over the tested resolutions is illustrated in \Cref{figure:line2}. 
		\begin{figure}[H]
			\centering
			\includegraphics[width=0.7\textwidth]{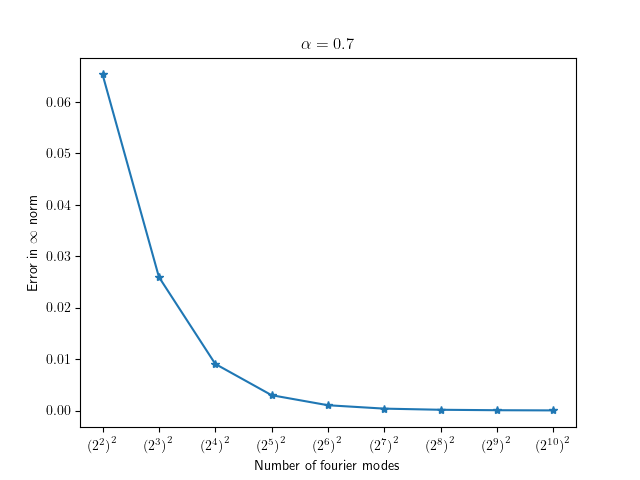} \\
			\caption[Simulation of $\big\| p^{(N)}_h \big \|_{\infty}$ for $\alpha = 0.7$]{Simulation of $\big\| p^{(N)}_h \big \|_{\infty}$ for $\alpha = 0.7$ and increasing Fourier resolution $N$. 
			}
			\label{figure:line2}
	\end{figure}
	
	\noindent
	For $\alpha = 0.9$, we observe the same qualitative behaviour as for $\alpha = 0.7$, but the gradient grows and the nonlinear remainder decays more rapidly over the tested resolutions.
	\begin{table}[H]
	\centering
		\begin{tabular}{lrrr}
			\toprule
			Number of Fourier modes & $\left\| v^{(N)} \right \|_{\infty}$ & $\left\| \nabla  v^{(N)} \right \|_{\infty}$  & $\left\| v^{(N)} \right \|_{C^1}$ \\
			\midrule
			${(2^{2})}^2 =16$ & 0.527842 & 3.426971 & 3.954813 \\
			${(2^{3})}^2 =64$ & 0.562849 & 6.971630 & 7.534479 \\
			${(2^{4})}^2 =256$ & 0.551854 & 14.496256 & 15.048111 \\
			${(2^{5})}^2 =1024$ & 0.548000 & 30.283880 & 30.831880 \\
			${(2^{6})}^2 =4096$ & 0.537246 & 59.610059 & 60.147305 \\
			${(2^{7})}^2 =16384$ & 0.507359 & 113.373272 & 113.880632 \\
			${(2^{8})}^2 =65536$ & 0.476628 & 212.051834 & 212.528462 \\
			${(2^{9})}^2 =262144$ & 0.441572 & 398.740028 & 399.181600 \\
			${(2^{10})}^2 =1048576$ & 0.405952 & 739.753947 & 740.159899 \\
			\bottomrule
		\end{tabular}
	\caption[Norms of the exponential Euler scheme in Fourier space for $\alpha = 0.9$.]{Norms of the exponential Euler scheme in Fourier space with respect to the number of Fourier modes for $\alpha = 0.9$.}
	\label{tabelle:5}
	\end{table}
	 
	\noindent
	The results for the nonlinear remainders with $\alpha = 0.9$ are as follows:
	\begin{table}[H]
		\centering
		\begin{tabular}{lrrr}
			\toprule
			Number of Fourier modes & $\left\| p^{(N)}_h \right \|_{\infty}$ &  $\left\| \nabla p^{(N)}_h \right \|_{\infty}$ &  $\left\|  p^{(N)}_h \right \|_{C^1}$ \\
			\midrule
			${(2^{2})}^2 =16$ & 0.036118 & 0.179939 & 0.216057 \\
			${(2^{3})}^2 =64$ & 0.008669 & 0.060559 & 0.069228 \\
			${(2^{4})}^2 =256$ & 0.001975 & 0.020830 & 0.022805 \\
			${(2^{5})}^2 =1024$ & 0.000615 & 0.007402 & 0.008017 \\
			${(2^{6})}^2 =4096$ & 0.000197 & 0.002494 & 0.002691 \\
			${(2^{7})}^2 =16384$ & 0.000065 & 0.000831 & 0.000896 \\
			${(2^{8})}^2 =65536$ & 0.000021 & 0.000273 & 0.000294 \\
			${(2^{9})}^2 =262144$ & 0.000006 & 0.000084 & 0.000090 \\
			${(2^{10})}^2 =1048576$ & 0.000002 & 0.000026 & 0.000028 \\
			\bottomrule
		\end{tabular}
		\caption[Norms of the nonlinear remainder for $\alpha = 0.9$.]{Norms of the nonlinear remainder with respect to the number of Fourier modes with $\alpha = 0.9$.}
	\label{tabelle:6}
\end{table}

\appendix

	\section{Fourier Series on $\T^2$} \label{subsec:2dTor}

We begin with standard material on Fourier series for the two-dimensional torus, fractional Sobolev spaces, and analytic semigroups generated by the Bilaplace operator. These classical results are detailed in \cite{GIP:15} and \cite{PAZY}.

    \medskip
    \noindent
	Let $\T^2 = [0,L]^2$ be the two-dimensional torus of length $L$.  
	For $u \in L^2([0,T] \times \T^2)$, we consider its Fourier series in space
	\[
	u(t,x) = \sum_{k \in \mathbb{Z}^2} u_k(t)\, e_k(x),
	\]
	for each $x = (x_1,x_2) \in \T^2$,
	where $\{e_k\}_{k \in \mathbb{Z}^2}$ is the standard orthonormal Fourier basis of $L^2(\T^2)$ defined by
	\begin{align} \label{def:ek}
		e_k(x) \coloneqq \omega_{k_1}(x_1)\,\omega_{k_2}(x_2),
		\qquad
		\omega_{k_i}(z) \coloneqq
		\begin{cases}
			\sqrt{\frac{2}{L}} \sin\left( \frac{2\pi k_i}{L}z \right), & k_i > 0,\\
			\frac{1}{\sqrt{L}}, & k_i = 0,\\
			\sqrt{\frac{2}{L}} \cos\left(\frac{2\pi k_i}{L}z\right), & k_i < 0.
		\end{cases}
	\end{align}
	The Fourier coefficients are given by
	\[
	u_k(t) = \langle u(t), e_k \rangle_{L^2(\T^2)}.
	\]

	\noindent
	For each $k = (k_1,k_2)\in \mathbb{Z}^2$, the functions $e_k$ satisfy
	\begin{align} \label{Laplace:EV}
	- \Delta e_k = \mu_k\, e_k,
	\qquad
	\mu_k \coloneqq \left( \frac{2\pi |k|}{L} \right)^2.
	\end{align}
	The Bilaplace operator $A \coloneqq -\Delta^2$ acts diagonally with eigenvalues $-\mu_k^2$:
	\begin{align} \label{def:A}
		A e_k = -\Delta^2 e_k = -\mu_k^2\, e_k.
	\end{align}

	\section{Fractional Sobolev Spaces} \label{sec:fracSobol}
	
	We recall the standard characterization of Sobolev spaces and fractional Sobolev spaces on a Lipschitz domain $\Omega \subset \R^d$.		
		\begin{definition} [Sobolev space]
			For $m\in\mathbb{N}$ and $p \geq 1$ we define
			\[
			W^{m,p}(\Omega)\coloneqq \{u\in L^p(\Omega): D^\beta u\in L^p(\Omega)\ \text{for each}\ |\beta|\leq m\},
			\]
			endowed with the norm
			\[
			\|u\|_{W^{m,p}} \coloneqq \sum_{|\beta|\leq m}\|D^\beta u\|_{L^p}.
			\]
		\end{definition}
		
	
	\noindent
	For simplicity of notation, we set $\ZZ \coloneqq \mathbb{Z}^2 \setminus \{(0,0)\}$.
	
	\begin{definition} \label{def:Halpha}
	For $\alpha \in \R$ we define
	\begin{align} 
		\mathcal{H}^\alpha \coloneqq
		\mathcal{H}^\alpha(\T^2)
		\coloneqq
		\left\{
		u = \sum_{k \in \ZZ} u_k e_k
		: \,
		\sum_{k \in \ZZ} \mu_k^\alpha\, |u_k|^2 < \infty,
		\ \int_{\T^2} u(x)\dd x = 0
		\right\},
	\end{align}
	equipped with norm
	\begin{align} \label{norm:H}
		\|u\|_{\mathcal{H}^\alpha} 
		\coloneqq \|(-\Delta)^{\frac{\alpha}{2}}u\|_{L^2}
		= \left( \sum_{k \in \ZZ} \mu_k^\alpha |u_k|^2 \right)^{\frac{1}{2}}.
	\end{align}
	\end{definition}
	
	\begin{remark}[Poincaré's inequality]
		Due to the moving frame assumption, Poincaré's inequality holds on~$\T^2$, i.e. for each $p \in [1,\infty)$ and $u \in W^{1,p}(\T^2)$ we have
		\[
		\|u\|_{L^p(\T^2)} \leq C \|\nabla u\|_{L^p(\T^2,\R^2)}.
		\]
		In particular, the canonical Sobolev norm $\|\cdot\|_{H^1}$ is equivalent to $\|\cdot\|_{\mathcal{H}^1}$.
	\end{remark}

	\begin{definition}[Vector valued  fractional Sobolev spaces]
		For $\alpha \in \R$, $p \geq 1$ we define the vector-valued fractional Sobolev spaces
		\begin{align*}
			\mathcal{H} ^{\alpha } \left(\T^2, \mathbb{R}^2 \right)
			& \coloneqq 
			\mathcal{H} ^{\alpha } \left(\T^2  \right)
			\times\mathcal{H} ^{\alpha } \left(\T^2 \right)  
		\end{align*}
		equipped with the norms
		\begin{align*}
			\| g \|_{\mathcal{H} ^{\alpha } \left(\T^2, \mathbb{R}^2 \right)}
			& \coloneqq 
            \left(
			\| g_1 \|_{\mathcal{H} ^{\alpha } \left(\T^2  \right)}^2
			+
			\| g_2 \|_{\mathcal{H} ^{\alpha } \left(\T^2  \right)}^2
            \right)^{\frac{1}{2}} 
		\end{align*}
		for $g = \big(g_1 , g_2 \big) \in \mathcal{H} ^{\alpha } \left(\T^2, \mathbb{R}^2 \right)$.
	\end{definition}
	
	\noindent
With this convention, the gradient defines an isometric bounded linear operator
\[
    \nabla\colon
    \mathcal H^{\alpha+1}(\mathbb T^2)
    \longrightarrow
    \mathcal H^{\alpha}(\mathbb T^2;\mathbb R^2),
\]
that is  a direct consequence of integration by parts formula
	\[ \|\nabla y\|_{\mathcal{H}^{\alpha-1}
		\left(
		\T^2, \R^2
		\right)
	}^2 = \langle  (-\Delta) y, y\rangle_{\mathcal{H}^{\alpha-1}} = \|(-\Delta)^{\frac{1}{2}} y\|_{\mathcal{H}^{\alpha-1}}^2
	=  \| y\|_{\mathcal{H}^\alpha}^2
	\]
	for $y \in \mathcal{H}^\alpha$. 
	In particular, for the divergence we obtain 
	\[
	\|\nabla \cdot g\|_{L^2} = \| \partial_x g_1+ \partial_y g_2\|_{L^2} \leq C \|g\|_{\mathcal{H}^1\left( \T^2, \R^2 \right)},
	\]
	where $g = \big(g_1 , g_2 \big) \in \mathcal{H}^1\left( \T^2, \R^2 \right)$.
	
	\section{Analytic Semigroup Generated by the Bilaplace Operator}
	
	The negative Bilaplace operator $A=-\Delta^2$ with periodic boundary conditions generates an analytic semigroup $\big( e^{tA} \big)_{t \geq 0}$ on $L^p(\T^2)$ for every $p \in (1,\infty)$, see \cite[pp.~212--214]{PAZY}.  
	Using the Fourier representation,
	\begin{align} \label{Fourier:etA}
		e^{tA} u(x) = \sum_{k \in \ZZ} e^{-t \mu_k^2} u_k e_k(x),
		\qquad t \geq 0,\ x \in \T^2.
	\end{align}
	
	\begin{lemma} \label[lemma]{lem:semigroupestimate}
		For $\beta > \alpha$ and $t>0$, the following estimate holds:
		\begin{align} \label{e:SG}
			\left\| e^{tA} \right\|_{L\left(\mathcal{H}^\alpha,\mathcal{H}^\beta\right)}
			\leq \left( \frac{\beta - \alpha}{4e} \right)^{\frac{\beta - \alpha}{4}} t^{\frac{\alpha - \beta}{4}}.
		\end{align}
	\end{lemma}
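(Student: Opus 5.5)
The estimate follows from the explicit Fourier representation \eqref{Fourier:etA}, which diagonalises $e^{tA}$ in the orthonormal basis $(e_k)$. This reduces the operator norm between $\mathcal{H}^\alpha$ and $\mathcal{H}^\beta$ to a supremum over the eigenvalues $\mu_k$, and that supremum is bounded by a one-variable calculus maximisation.

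For $u=\sum_{k\in\ZZ}u_k e_k\in\mathcal{H}^\alpha$ and $t>0$, the definition \eqref{norm:H} gives
\begin{align*}
\left\|e^{tA}u\right\|_{\mathcal{H}^\beta}^2
=\sum_{k\in\ZZ}\mu_k^{\beta}e^{-2t\mu_k^2}|u_k|^2
=\sum_{k\in\ZZ}\left(\mu_k^{\frac{\beta-\alpha}{2}}e^{-t\mu_k^2}\right)^2\mu_k^{\alpha}|u_k|^2
\leq \sup_{x>0}\left(x^{\frac{\beta-\alpha}{4}}e^{-tx}\right)^2\|u\|_{\mathcal{H}^\alpha}^2 .
\end{align*}
Here the substitution $x=\mu_k^2$ was used, so that $\mu_k^{\frac{\beta-\alpha}{2}}=x^{\frac{\beta-\alpha}{4}}$. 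Taking square roots, it remains to bound $g(x)\coloneqq x^{\gamma}e^{-tx}$ on $(0,\infty)$, where $\gamma\coloneqq\frac{\beta-\alpha}{4}>0$.

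Since $g'(x)=x^{\gamma-1}e^{-tx}(\gamma-tx)$, the function $g$ attains its maximum at $x^\ast=\gamma/t$. Its maximal value is
\begin{align*}
g(x^\ast)=\left(\frac{\gamma}{t}\right)^{\gamma}e^{-\gamma}=\left(\frac{\gamma}{e}\right)^{\gamma}t^{-\gamma}
=\left(\frac{\beta-\alpha}{4e}\right)^{\frac{\beta-\alpha}{4}}t^{\frac{\alpha-\beta}{4}},
\end{align*}
which is exactly the constant and the power of $t$ claimed in \eqref{e:SG}.

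No step presents a genuine obstacle. The only point needing care is that the mean-zero convention excludes $k=0$, so every $\mu_k$ is strictly positive and the substitution and the powers $\mu_k^{\alpha}$, $\mu_k^{\beta}$ are well defined for arbitrary real $\alpha<\beta$. Extending the supremum from the discrete set $\{\mu_k^2\}$ to all $x>0$ can only increase it, so the bound holds.
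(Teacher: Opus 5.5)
Your proof is correct and takes the same route the paper indicates: diagonalise $e^{tA}$ through the Fourier representation, then maximise $x^{\frac{\beta-\alpha}{4}}e^{-tx}$ over $x>0$, which yields exactly the stated constant and power of $t$. The paper only cites ``basic calculus applied to the Fourier expression'' (and \cite[Lemma~A.7]{GIP:15}); your computation supplies precisely those details.
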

	
	\noindent
	This result follows directly from basic calculus applied to the Fourier expression and the eigenvalues of $-\Delta^2$.
	For a more detailed overview, see \cite[Lemma~A.7]{GIP:15}.

		\noindent	
		Thus,
		$$e^{tA}\nabla  \cdot  \colon
		\mathcal{H}^{\alpha -1} \left(\T^2, \mathbb{R}^2  \right) \rightarrow \mathcal{H}^{\alpha} \left(\T^2\right)
		$$
		is a bounded linear operator with 
		\[
		\left \|
		e^{tA} \nabla \cdot
		\right \|_{L \left( \mathcal{H}^{\alpha-1} \left(\T^2 , \R^2 \right),  \mathcal{H}^\alpha \right)}
		\leq 
		\left \|
		e^{tA}
		\right \|_{L \left( \mathcal{H}^{\alpha-2},  \mathcal{H}^\alpha \right)}
		\left \|
		\nabla \cdot\right \|_{L \left( \mathcal{H}^{\alpha-1} \left(\T^2 , \R^2 \right),  \mathcal{H}^{\alpha-2} \right)}
		\leq 
		C   t^{-\frac{1}{2}}.
		\]

\section{Technical Results}
\begin{lemma} [{\cite[Theorem~2.21]{Rimmele2026}}] \label[lemma]{lem:Z-C0}
Let $\varepsilon>0$, $\psi \in(0,\frac12)$, and $p\geq1$. 
Then the stochastic convolution~$Z$ and the regularized stochastic convolution~$P_N Z$ belong to $
L^p\bigl(\Omega;C^0([0,T]\times\T^2)\bigr)$,
and there exists a constant $C=C(p,L,T)>0$ such that
\begin{align*}
\left( \mathbb{E} \|Z\|_{C^0([0,T]\times\T^2)}^p
\right)^{1/p}
& \leq
C
\left(
\sum_{k\in\ZZ}
 \mu_k^{2\psi-2}
\right)^{1/2}
<\infty
\\
\left( \mathbb{E}
\| P_N Z \|_{C^0([0,T]\times\T^2)}^p
\right)^{1/p}
& \leq
C
\left(
\sum_{k\in\ZZ, |k | \leq N}
 \mu_k^{2\psi-2}
\right)^{1/2}
<\infty.
\end{align*}

\end{lemma}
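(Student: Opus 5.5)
We prove the lemma by working mode by mode: estimate each Fourier mode of $Z$, then combine them with Gaussian hypercontractivity and a Kolmogorov-type argument in time and space.

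\textbf{Step 1: single-mode moments.} For each $k\in\ZZ$, $Z_k(t)$ is a centred Gaussian with $\E|Z_k(t)|^2=\frac{1-e^{-2t\mu_k^2}}{2\mu_k^2}\leq \frac{1}{2}\mu_k^{-2}$. For the time increments with $s\le t$, we split
\[
Z_k(t)-Z_k(s)=\bigl(e^{-(t-s)\mu_k^2}-1\bigr)Z_k(s)+\int_s^t e^{-(t-r)\mu_k^2}\dd\beta_k(r).
\]
The Itô isometry, together with $|1-e^{-x}|\le x^{\psi}$ and $1-e^{-x}\le x^{\psi}$ for $\psi\in(0,1]$, then gives
\[
\E|Z_k(t)-Z_k(s)|^2\le C_\psi\,\mu_k^{2\psi-2}|t-s|^{\psi}.
\]
This is the bound already quoted in the proof of \Cref{lem:Z-beta}.

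\textbf{Step 2: pointwise increments in space-time.} Fix $(t,x),(s,y)$. The quantity $P_NZ(t,x)-P_NZ(s,y)$ is a centred Gaussian. Using independence of the $\beta_k$ and $|e_k|\le C$, $|e_k(x)-e_k(y)|\le C\min(1,|k||x-y|)\le C|k|^{2\psi}|x-y|^{2\psi}$, its variance is bounded by
\[
C\sum_{k}\mu_k^{2\psi-2}\bigl(|t-s|^{\psi}+|x-y|^{2\psi}\bigr)+C\,\mu_k^{-2}|k|^{4\psi}|x-y|^{2\psi}.
\]
This is at most $C\bigl(\sum_k\mu_k^{2\psi-2}\bigr)\bigl(|t-s|^{\psi}+|x-y|^{2\psi}\bigr)$. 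For $\psi<\frac12$ the series $\sum_{k\in\ZZ}|k|^{4\psi-4}$ converges in $d=2$. Gaussian hypercontractivity upgrades this to $q$-th moments for any $q$, with a factor $C_q$.

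\textbf{Step 3: Kolmogorov/Garsia--Rodemich--Rumsey.} On the three-dimensional parameter set $[0,T]\times\T^2$ we have increments of order $(|t-s|^{1/2}+|x-y|)^{\psi q}$ in $L^q$. Choosing $q$ large enough that $\psi q>3$, the Kolmogorov continuity theorem gives a continuous modification. It also gives
\[
\bigl(\E\sup|P_NZ|^q\bigr)^{1/q}\le C\bigl(\sum_{|k|\le N}\mu_k^{2\psi-2}\bigr)^{1/2},
\]
where we use $P_NZ(0,\cdot)=0$ to pass from the Hölder seminorm to the sup. Smaller $p$ then follow by Hölder's inequality.

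\textbf{Step 4: the untruncated $Z$.} Since $P_NZ\to Z$ in this $L^p(\Omega;C^0)$ norm (the tails $\sum_{|k|>N}\mu_k^{2\psi-2}\to0$), the bound for $Z$ is obtained as a limit.

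The main obstacle is Step 3. The constants from Kolmogorov must depend only on the variance sum and not on $N$. We handle this by applying the GRR inequality with explicit constants to the Gaussian field, rather than a qualitative continuity statement.
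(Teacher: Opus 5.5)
Your proof is correct and follows the argument the paper defers to \cite[Theorem~2.21]{Rimmele2026}: the single-mode bound $\E|Z_k(t)-Z_k(s)|^2\le C_\psi\mu_k^{2\psi-2}|t-s|^\psi$ (which the paper itself quotes in the proof of \Cref{lem:Z-beta}), Gaussian hypercontractivity, a quantitative Kolmogorov estimate with constant linear in $\bigl(\sum_{|k|\le N}\mu_k^{2\psi-2}\bigr)^{1/2}$, and then a Cauchy argument in $N$ to pass to $Z$. The only slip is the threshold in Step~3: under the parabolic metric $|t-s|^{1/2}+|x-y|$ the set $[0,T]\times\T^2$ has dimension $4$, so you need $\psi q>4$ rather than $\psi q>3$, which is harmless since $q$ can be taken arbitrarily large.
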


\begin{proof}
This follows from \cite[Theorem~2.21]{Rimmele2026}.
\end{proof}

The following lemma from \cite[Lemma~7.1.1]{HDG} provides an explicit upper bound for the growth rate of solutions, even in the presence of a singularity at time $t$ within the integrand.	
	\begin{lemma}[Henry--Grönwall lemma]\label[lemma]{lem:G-HDG}
		Suppose $b\geq 0$, $\beta>0$ and $a$ is a non-negative
		function locally integrable on $[0,T]$ (some $ T\leq+\infty$), and suppose
		$f$ is nonnegative and locally integrable on $0\leq t \leq  T$
		with 
		\begin{align*}
			f(t)\leq a(t)+b\int_0^t (t-s)^{\beta-1}f(s)ds
		\end{align*}
		for each $0\leq t  < T$.
		Then \begin{align}
			\label {est-Gronw}
			f(t)\leq a(t)+\theta\int_0^t E_\beta'(\theta(t-s))a(s)ds,
		\end{align}
		holds for each $0 \leq t < T$,
		where 
		\begin{align*}
			\theta=b(\Gamma(\beta))^\frac{1}{\beta},  \quad E_\beta=\sum_{n=0}^\infty \frac{z^{n\beta}}{\Gamma(n\beta+1)},  \quad  E_\beta'=\frac{\partial}{\partial z}E_\beta(z).
		\end{align*}
		Particularly, if $a(t)\equiv a$ is constant, we obtain 
		\begin{align*}
			f(t)\leq a  E_\beta(\theta t).
		\end{align*}
	\end{lemma}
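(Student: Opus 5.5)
The statement is the Henry--Grönwall lemma. I would prove it by iterating the integral inequality and identifying the resulting resolvent kernel with the derivative of the Mittag-Leffler function. Introduce the linear operator
\[
(\mathcal B\phi)(t)\coloneqq b\int_0^t (t-s)^{\beta-1}\phi(s)\dd s
\]
on non-negative locally integrable functions on $[0,T)$. It is positive, meaning $\phi\leq\psi$ implies $\mathcal B\phi\leq\mathcal B\psi$. The hypothesis reads $f\leq a+\mathcal Bf$. Applying $\mathcal B$ repeatedly and using positivity gives, for every $n\in\N$,
\[
f\leq \sum_{k=0}^{n-1}\mathcal B^k a+\mathcal B^n f .
\]

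The next step is to compute the iterated kernels. By induction, using the Beta integral
\[
\int_s^t (t-r)^{\beta-1}(r-s)^{k\beta-1}\dd r=\frac{\Gamma(\beta)\Gamma(k\beta)}{\Gamma((k+1)\beta)}(t-s)^{(k+1)\beta-1},
\]
one gets
\[
(\mathcal B^k\phi)(t)=\int_0^t \frac{(b\Gamma(\beta))^k}{\Gamma(k\beta)}(t-s)^{k\beta-1}\phi(s)\dd s .
\]
With $\theta=(b\Gamma(\beta))^{1/\beta}$ this kernel becomes $\theta^{k\beta}(t-s)^{k\beta-1}/\Gamma(k\beta)$.

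Then I show that the remainder $\mathcal B^n f$ tends to $0$. For $t\leq T'<T$ the kernel is bounded by $\theta^{n\beta}\max(1,T')^{n\beta}/\Gamma(n\beta)$ times $(t-s)^{-1}$ only when $n\beta<1$. For $n\beta\geq1$ it is bounded by $\theta^{n\beta}T'^{\,n\beta-1}/\Gamma(n\beta)$, which tends to $0$ because of the superexponential growth of $\Gamma$. Since $f\in L^1(0,T')$, we get $\mathcal B^nf(t)\to0$. This is the only place where the integrability of $f$ enters, and it is the step needing care, though it is routine. Passing to the limit gives
\[
f(t)\leq a(t)+\int_0^t\sum_{k\geq1}\frac{\theta^{k\beta}(t-s)^{k\beta-1}}{\Gamma(k\beta)}a(s)\dd s .
\]
The interchange of sum and integral is justified by monotone convergence, since all terms are non-negative.

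Finally, I identify the series. Differentiating $E_\beta(z)=\sum_n z^{n\beta}/\Gamma(n\beta+1)$ termwise gives
\[
E_\beta'(z)=\sum_{n\geq1}\frac{n\beta z^{n\beta-1}}{\Gamma(n\beta+1)}=\sum_{n\geq1}\frac{z^{n\beta-1}}{\Gamma(n\beta)} .
\]
Hence $\theta E_\beta'(\theta(t-s))=\sum_{k\geq1}\theta^{k\beta}(t-s)^{k\beta-1}/\Gamma(k\beta)$, which is exactly \eqref{est-Gronw}. For constant $a$, integrating gives
\[
\int_0^t\theta E_\beta'(\theta(t-s))\dd s=E_\beta(\theta t)-E_\beta(0)=E_\beta(\theta t)-1,
\]
so $f(t)\leq aE_\beta(\theta t)$.
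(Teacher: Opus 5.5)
Your argument is correct. It is the standard proof of Henry's Lemma 7.1.1, which the paper cites rather than proves: iterate to $f\le\sum_{k<n}\mathcal B^k a+\mathcal B^n f$, compute $\mathcal B^n$ via the Beta integral, kill the remainder using $(\theta T')^{n\beta}/\Gamma(n\beta)\to0$, and identify the resolvent kernel with $\theta E_\beta'(\theta\,\cdot)$.

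You rightly take $\theta=(b\Gamma(\beta))^{1/\beta}$. This is Henry's constant and the one the paper actually uses ($\theta=\frac{\pi}{2e}$ for $b=(2e)^{-1/2}$, $\beta=\frac12$). The printed $\theta=b\,\Gamma(\beta)^{1/\beta}$ is a typo, under which the claim fails, e.g.\ for $\beta<1<b$: the exact solution $f=aE_\beta((b\Gamma(\beta))^{1/\beta}t)$ of the equality case is then a counterexample.
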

	
	\begin{remark}
		In order to obtain \Cref{lem:G-HDG} on $[0,T]$ instead of $[0,T)$, we must require that $f$ is continuous at $T$.
	\end{remark}

\begin{lemma}\label[lemma]{lem:semigroupincrement}
Let $A \coloneqq - (-\Delta)^2$.
For every $s\in\R$, $\theta\in(0,1]$, $u\in\mathcal H^{s+4\theta}$, and $t\geq0$, 
\begin{align}\label{ineq:SGalpha}
\|(e^{tA}-\operatorname{Id})u\|_{\mathcal H^s}
\leq
C_\theta t^\theta
\|u\|_{\mathcal H^{s+4\theta}}.
\end{align}
\end{lemma}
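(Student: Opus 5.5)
The statement to prove is \Cref{lem:semigroupincrement}. Since $A=-\Delta^2$ is diagonal in the Fourier basis, I would argue mode by mode. For $u=\sum_{k\in\ZZ}u_ke_k$, \eqref{Fourier:etA} gives
\begin{align*}
(e^{tA}-\operatorname{Id})u=\sum_{k\in\ZZ}\bigl(e^{-t\mu_k^2}-1\bigr)u_k e_k .
\end{align*}
By \eqref{norm:H} this yields
\begin{align*}
\|(e^{tA}-\operatorname{Id})u\|_{\mathcal H^s}^2=\sum_{k\in\ZZ}\mu_k^s\bigl|1-e^{-t\mu_k^2}\bigr|^2|u_k|^2 .
\end{align*}
The claim then reduces to a scalar inequality applied uniformly in $k$.

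The key step is the elementary bound $1-e^{-x}\leq C_\theta x^\theta$ for all $x\geq0$ and $\theta\in(0,1]$. To prove it, I would split into two cases.
\begin{itemize}
\item For $x\in[0,1]$, use $1-e^{-x}\leq x\leq x^\theta$.
\item For $x>1$, use $1-e^{-x}\leq 1\leq x^\theta$.
\end{itemize}
In fact this gives $C_\theta=1$. Alternatively, one can interpolate between $1-e^{-x}\leq\min\{1,x\}$ and $\min\{1,x\}\leq x^\theta$.

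Applying the bound with $x=t\mu_k^2$ gives $|1-e^{-t\mu_k^2}|^2\leq C_\theta^2t^{2\theta}\mu_k^{4\theta}$. Summing over $k$:
\begin{align*}
\|(e^{tA}-\operatorname{Id})u\|_{\mathcal H^s}^2\leq C_\theta^2t^{2\theta}\sum_{k\in\ZZ}\mu_k^{s+4\theta}|u_k|^2=C_\theta^2t^{2\theta}\|u\|_{\mathcal H^{s+4\theta}}^2 .
\end{align*}
Taking square roots gives \eqref{ineq:SGalpha}. The right-hand side is finite because $u\in\mathcal H^{s+4\theta}$, which also justifies the termwise manipulation.

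No step is a genuine obstacle. The only point needing care is that the scalar inequality holds uniformly for all $x\geq0$, not only for small $x$, so that no restriction on $t$ is needed. This is exactly why the restriction $\theta\leq1$ appears: for $\theta>1$ the bound fails as $x\to0$.
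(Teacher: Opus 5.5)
Your proof is correct and follows the paper's argument: diagonalize in the Fourier basis via Parseval's identity and apply the scalar bound $|1-e^{-x}|\leq C_\theta x^\theta$ with $x=t\mu_k^2$. You also prove that scalar bound explicitly, obtaining $C_\theta=1$, whereas the paper only states it.
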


\begin{proof}
    The assertion follows from Parseval's identity, and
$$|1-e^{-x}|\leq C_\theta x^\theta
\qquad \text{for $x\geq0$.}
$$
\end{proof}
\pagebreak

\paragraph{Acknowledgements} 
All three  authors acknowledge the support of DFG BL 535/12-1, Project number: 514726621

\end{document}